\newtheorem{theorem}{Theorem}
\newtheorem{lemma}[theorem]{Lemma}
\newtheorem{corollary}[theorem]{Corollary}
\theoremstyle{definition}
\newtheorem{definition}[theorem]{Definition}
\theoremstyle{remark}
\newtheorem{remark}[theorem]{Remark}
\newcommand{\R}{\mathbb{R}}
\newcommand{\pr}{\partial}
\newcommand{\mext}{M_{\textrm{ext}}}
\date{\today}
\title{$C^{0}$-inextendibility of FLRW spacetimes within a subclass of axisymmetric spacetimes}
\author{Melanie Graf\thanks{Department of Mathematics, Universit\"at Hamburg,  20146 Hamburg, Germany; \url{melanie.graf@uni-hamburg.de} \\
Significant parts of this work was completed while at the Department of Mathematics, Universit\"at T\"ubingen, 72076 T\"{u}bingen, Germany and at the Institute for Mathematics, Universit\"at Potsdam, 14476 Potsdam, Germany.} \;
and Marco van den Beld-Serrano\thanks{Department of Mathematics, Universit\"at Regensburg, 93053 Regensburg, Gemany; \url{marco.van-den-beld-serrano@ur.de}. Most part of this work was completed while at the Department of Mathematics, Universit\"at T\"ubingen, 72076 T\"{u}bingen, Germany.}}
\begin{document}
\date{}
\maketitle

\vspace{-.3in}

\begin{abstract} 

Starting from the proof of the $C^0$-inextendibility of Schwarzschild by Sbierski, the past decade has seen renewed interest in showing low-regularity inextendibility for known spacetime models. 
Specifically, a lot of attention has been paid to FLRW spacetimes and there is an ever growing array of results in the literature. Apart from hoping to provide a concise summary of the state of the art we present an extension of work by Galloway and Ling on $C^0$-inextendibility of certain FLRW spacetimes within a subclass of spherically
symmetric spacetimes,  \cite{GallowayLing}, to $C^0$-inextendibility within a subclass of axisymmetric spacetimes. Notably our result works in the case of flat FLRW spacetimes with $a(t)\to 0$ for $t\to 0^+$, a setting where other known $C^0$-inextendibility results for FLRW spacetimes due to Sbierski, \cite{Sbierski2023}, do not apply.

\medskip

\noindent
\emph{MSC2020:} 53C50, 53B30, 83C99
\end{abstract}

\setcounter{tocdepth}{2}
\tableofcontents

\section{Introduction}

(In-)extendibility of spacetimes is a recurrent topic well known to play a central role in General Relativity both for specific models and for quite general classes. The usual procedure is to follow one (or both) of the following paths: either an explicit extension of the spacetime is found/constructed or it is proven that there is some general obstruction to extendibility within a certain class of extensions. For $C^2$-inextendibility blow up of some  curvature scalar (e.g., the scalar curvature or the Kretschmann scalar) gives such an extendibility obstruction because these scalars are invariant under diffeomorphisms. In other words, their blow up is a geometric property (i.e. coordinate independent) of the spacetime. Of course, the blow up of a curvature scalar only tells us that the manifold is $C^{2}$-inextendible (as the curvature scalars contain second order derivatives of the metric). Hence, different strategies are required in order to explore the inextendibility of a spacetime in a lower regularity class (e.g. $C^{0}$- or $C^{0,1}$-regularity). Here a lot of new tools and techniques have been developed in the last six years. In particular, the question of $C^{0}$-inextendibility was first
tackled by Sbierski \cite{Sbierski2016}, who proved that the Minkowski and the maximally extended Schwarzschild spacetime are $C^{0}$-inextendible. We now have a collection of low regularity inextendibility criteria the most stringent (though not always most useful in practice as one is often interested in inextendibility precisely in the cases where spacetimes do contain incomplete geodesics) of them being timelike geodesic completeness: in the first place, in \cite{Sbierski2016} it was proven that if no timelike curve intersects the boundary of the extension $\pr \iota(M)$, then the spacetime is inextendible. This result already pointed to the idea that, under certain additional assumptions, timelike (geodesic) completeness would yield the inextendibility of a spacetime (in a low regularity class). Indeed, in \cite{LingGalSbierski} it was proven that a smooth globally hyperbolic and timelike geodesically complete spacetime is $C^0$-inextendible. Moreover, in \cite{Graf}, it was shown that if the global hyperbolicity condition is dropped the spacetime still is $C^{0,1}$-inextendible. Finally, in \cite{Minguzzi2019}, using the Lorentz-Finsler framework, it was shown that a smooth timelike geodesically complete spacetime is $C^{0}$-inextendible and in \cite{GrantKunzingerSaemann} an inextendibility result for timelike complete Lorentzian length spaces is shown. 

Beyond these general structural results special attention has been paid to FLRW spacetimes. FLRW spacetimes generally and especially {\em flat} or {\em Euclidean} FLRW spacetimes are of great interest in Cosmology in order to model the universe. For this reason one is particularly interested in (in-)extendibility results across a possible Big Bang. Part of the purpose of this note is to given an overview of the different concrete cases treated and the precise results available depending on the behaviour of the scale factor as one approaches the Big Bang or future infinity\footnote{It is believed that the Universe underwent different evolution stages in which the scale factor $a$ would have had a different dependence on time (see the discussion in \cite[Section 5.2, Table 5.1]{Wald}).}  and whether we are looking at spherical, Euclidean or hyperbolic FLRW spacetimes. This general review will be the focus of Section \ref{sec:inext_overview}, see also Table \ref{table} for a quick summary.  

The most open of the three cases is that of Euclidean FLRW spacetimes. While there is a $C^{0,1}_{\textrm{loc}}$-inextendibility result based on local holonomy, see \cite{Sbierski2020}, that works for flat FLRW spacetimes with a Big Bang at finite time and particle horizon the question of $C^0$-(in-)extendibility is still very open. Galloway and Ling in \cite{GallowayLing} used an approach based on uniqueness  of certain coordinate changes in flat and hyperbolic FLRW spacetimes to prove that such spacetimes are $C^{0}$-inextendible within a certain subclass of spherically symmetric spacetimes. We present this result in Section \ref{sec:sphericalresult}. Apart from reviewing available literature  the aim of this paper is to generalize their result to $C^{0}$-inextendibility within a larger subclass of axisymmetric spacetimes which enjoy similar uniqueness properties for coordinates as the strongly spherically symmetric extensions considered in \cite{GallowayLing}. 
 We show (cf. Corollary \ref{cor:noaxisymmetric_Flat_FLRW})
 \begin{theorem}
    Let $(M,g)$ be a (4-dimensional) flat future inextendible FLRW spacetime with scale factor $a$ satisfying that $a'(0)\in (0,\infty]$. Then, there is no past natural strongly axisymmetric $C^{0}$-extension of $(M,g)$ compatible with the strongly spherically symmetric coordinates of \cite{GallowayLing}. 
\end{theorem}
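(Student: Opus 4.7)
The plan is to mimic the Galloway--Ling strategy from \cite{GallowayLing} but replace the $SO(3)$-rigidity used there by the weaker $SO(2)$-rigidity available in the axisymmetric setting, leveraging the hypothesis that the extension is \emph{strongly} axisymmetric and compatible with the Galloway--Ling strongly spherically symmetric coordinates. I would argue by contradiction: suppose a past natural strongly axisymmetric $C^{0}$-extension $\iota\colon (M,g)\hookrightarrow(\tilde{M},\tilde{g})$ exists, and pick a boundary point $p\in\partial^{-}\iota(M)$ which, by naturality, is reached by a past-directed timelike curve $\gamma$ coming from the interior. In FLRW comoving coordinates on $M$, the curve reads $\gamma(s)=(t(s),\vec{x}(s))$ with $t(s)\to 0^{+}$ along the approach to $p$, which is where the Big Bang hypersurface would sit.

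First I would set up the coordinate comparison near $p$. Compatibility with the Galloway--Ling coordinates on one side, and the strong axisymmetric structure of $\tilde{g}$ on the other, furnish (on a neighbourhood $U\subset\tilde{M}$ of $p$ meeting $\iota(M)$) two sets of coordinates related by a transition map $\Phi$ which must be compatible with both symmetry groups. As in \cite{GallowayLing}, the core uniqueness step is to show that $\Phi$ is essentially rigid: the axisymmetric warped-product structure plus the cylindrical form of the flat FLRW metric in the Galloway--Ling chart force the angular variable to match up to an additive constant and the radial/time coordinates to transform by a map that is determined, up to a finite-dimensional ambiguity, by the scale factor $a(t)$. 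This reduces the problem to tracking the behaviour of $a$ and $a'$ as $t\to 0^{+}$.

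With this rigidity in hand, I would next use the continuity of $\tilde{g}$ at $p$ to pass to the limit along $\gamma$. The hypothesis $a'(0)\in(0,\infty]$ means that $a$ approaches its limiting value at $t=0^{+}$ with a strictly positive (possibly infinite) slope, so along $\gamma$ one gets quantitative control: the time-derivative components of the metric in the axisymmetric frame do not degenerate to zero, while certain purely spatial components controlled by $a(t)^{2}$ can tend to zero. Combined with the coordinate rigidity, these asymmetric limits force the components of $\tilde{g}$ in the axisymmetric chart at $p$ to either be degenerate or discontinuous, contradicting that $(\tilde{M},\tilde{g})$ is a genuine $C^{0}$-extension.

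The main obstacle I foresee is the coordinate-rigidity step: in the strongly spherically symmetric setting of \cite{GallowayLing} one can use the full $SO(3)$-action (in particular the existence of fixed points of one-parameter subgroups acting on each orbit) to pin down coordinates almost uniquely, whereas an axisymmetric extension only carries an $SO(2)$-action and an axis. One therefore has to exploit the extra ``strong'' hypothesis and the compatibility with the Galloway--Ling chart to recover enough uniqueness; concretely, I expect this to hinge on showing that the axis of symmetry in $\tilde{M}$ must coincide with the image under $\iota$ of a radial geodesic emanating from the spatial origin, after which the warped-product form and the continuity of $\tilde{g}$ reduce matters to the one-dimensional transition problem handled in \cite{GallowayLing}. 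Once that is achieved, the $a'(0)>0$ condition closes the argument as above.
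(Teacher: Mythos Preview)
Your outline has the right shape---assume a natural strongly axisymmetric extension exists, establish rigidity of the coordinate change, and derive a degeneration of the extended metric at the past boundary---but the concrete mechanism you propose for the rigidity step is not the one that works, and the vague description of the degeneration misses the actual computation.

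In the paper the rigidity is obtained not by analysing the $SO(2)$-action or by identifying the axis geometrically (your suggestion that the axis must be the image of a radial geodesic is not needed and does not appear), but by an explicit PDE calculation: writing $\rho=\rho(R,\theta)$, $z=z(R,\theta)$ and matching the $d\varphi^{2}$ coefficients immediately forces $\rho=R\sin\theta$; the remaining conditions (vanishing $dR\,d\theta$ cross term and matching of $dR^{2}$, $d\theta^{2}$) reduce to a first-order linear PDE for $z$ which is solved by the method of characteristics. The payoff is a closed expression for the new metric coefficients in terms of the old ones, in particular
\[
C=\frac{G}{G\cos^{2}\theta+\sin^{2}\theta}.
\]
This formula is the crux: along a timelike curve $\gamma$ approaching $p\in\partial^{-}\iota(M)$, the compatibility hypothesis forces $\rho_{\mathrm{ext}}(p)\in(0,\infty)$, hence $R(\gamma)$ has a finite positive limit; Galloway--Ling's Theorem then gives $G(\gamma)\to 0$ (here is where $a'(0)\in(0,\infty]$ enters, since $G=(1-r^{2}a'(t)^{2})^{-1}$ and $r\to\infty$), and the displayed formula shows $C(\gamma)\to 0$, contradicting nondegeneracy of $g_{\mathrm{ext}}$ at $p$. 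Your description of ``spatial components controlled by $a(t)^{2}$ tending to zero while time-derivative components do not'' is not quite this; the degenerating coefficient is the $d\rho^{2}$ term, and it degenerates through $G$, not through $a^{2}$ directly. You should replace the speculative axis-identification argument with this explicit coordinate computation.
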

referring to Section \ref{sec:cylindrical} for the definitions used here and to Corollary \ref{cor:noaxisymmetric_Flat_FLRW} for the precise statement (see also Theorem \ref{theo:noaxissymmetric}). We further mention that just as in \cite{GallowayLing} a similar result is available for hyperbolic FLRW spacetimes (cf. Corollary \ref{cor:noaxisymmetric_Hyp_FLRW}).  

\subsection{Notations and conventions}

\par We conclude the introduction with fixing our notations and conventions. The term $C^{k}$ spacetime for us always denotes a connected time-oriented Lorentzian manifold $(M,g)$, where $M$ is a smooth manifold but the metric $g$ is merely $C^{k}$-regular. Furthermore, timelike curves are piecewise smooth curves whose right and left handed sided derivatives lie in the same connected component of the lightcone, which coincides with the convention in \cite{GallowayLing}. The following basic concepts play an important role in our study.

\begin{definition}[$C^{l}$-extension]\label{def:classicalextension}
    Fix $k\geq 0$ and let $0\leq l\leq k$. Let $(M,g)$ be a $C^{k}$ spacetime with dimension $d$. A $C^{l}$\emph{-extension} of $(M,g)$ is a proper isometric embedding $\iota$ 
\begin{equation*}
        \iota\;:\; (M,g)\hookrightarrow (M_{\textrm{ext}},g_{\textrm{ext}})
    \end{equation*}
where $(M_{\textrm{ext}},g_{\textrm{ext}})$ is $C^{l}$ spacetime of dimension $d$. If such an embedding exists, then $(M, g)$ is said to be \emph{$C^{l}$- extendible}. The topological boundary of $M$ within $M_{\textrm{ext}}$ is $\partial \iota(M)\subset M_{\textrm{ext}}$. By a slight abuse of notation we will sometimes also call $(M_{\textrm{ext}},g_{\textrm{ext}}$) the extension of $(M,g)$, dropping the embedding $\iota$.
\end{definition}

\begin{definition}[Future and past boundary]
    Let $\iota: (M,g)\hookrightarrow (M_{\textrm{ext}},g_{\textrm{ext}})$ be a $C^{l}$-extension. We define the \emph{future boundary} $\partial^{+} \iota (M)$ and \emph{past boundary} $\partial^{-} \iota (M)$:
\begin{equation*}
        \partial^{+}\iota(M)\coloneqq \{p\in \partial \iota(M)\;:\: \exists\; \textrm{f.d.t.l. curve }\; \gamma : [0, 1] \rightarrow M_{\textrm{ext}} \;\textrm{with}\; \gamma(1)=p,\; \gamma([0,1))\subset \iota(M) \}
\end{equation*}
\begin{equation*}
        \partial^{-}\iota(M)\coloneqq \{p\in \partial \iota(M)\;:\: \exists\; \textrm{f.d.t.l. curve}\; \gamma : [0, 1] \rightarrow M_{\textrm{ext}} \;\textrm{with}\; \gamma(0)=p,\; \gamma((0,1])\subset \iota(M) \}
\end{equation*}
where ``f.d.t.l. curve" stands for future directed timelike curve. If $\partial^{-}\iota(M)\neq \emptyset$ (or $\partial^{+}\iota(M)\neq \emptyset$), then $\iota$ is called a \emph{past $C^{l}$-extension} (resp. a \emph{future $C^{l}$-extension})) and if no such extension exists, $(M,g)$ is said to be \emph{past $C^{l}$-inextendible} (resp. \emph{future $C^{l}$- inextendible}).
\end{definition}

Two of the earliest results about $C^0$-extensions tell us that, while not every point point in $\partial\iota(M)$ has to belong to one of these boundary components (and neither do they have to be disjoint), it suffices to study the future and past boundary of a spacetime in order to prove its $C^{0}$-inextendibility:
\begin{lemma}[Lemma 2.17 in \cite{Sbierski2016}, Proposition 2.3 in \cite{GallowayLing}]
    Let $\iota : (M,g)\rightarrow (M_{\textrm{ext}},g_{\textrm{ext}})$ be a $C^{0}$-extension. Then $\partial^{+}\iota(M)\cup\partial^{-}\iota(M)\neq\emptyset$.
\end{lemma}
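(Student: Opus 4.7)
The plan is to exploit that a proper isometric embedding between spacetimes of the same dimension is automatically open, so $\iota(M)$ is an open subset of $\mext$. Since the extension is understood to be nontrivial, $\iota(M) \subsetneq \mext$, and connectedness of $\mext$ then forces $\partial \iota(M) \neq \emptyset$. I would pick any $p \in \partial \iota(M)$ and fix a small convex normal neighborhood $U$ of $p$ in which the causal structure is governed by the exponential map and behaves essentially like that of Minkowski space.

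The key geometric step is to choose $q \in I^-(p) \cap U$ close enough to $p$ that $p$ itself lies in the open set $I^+(q) \cap U$; this is possible because the future cones of points slightly to the past of $p$ sweep over $p$. Since $p \in \partial \iota(M)$, the open neighborhood $I^+(q) \cap U$ of $p$ must meet $\iota(M)$, yielding $x \in I^+(q) \cap U \cap \iota(M)$. The radial geodesics $\tilde\alpha$ from $q$ to $p$ and $\alpha$ from $q$ to $x$ are then two future-directed timelike curves contained in $U$, the first ending outside $\iota(M)$, the second inside.

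The proof then splits on whether $q \in \iota(M)$ or not. If $q \in \iota(M)$, I would look at $\tilde\alpha$ and set $s^* := \inf\{s : \tilde\alpha(s) \notin \iota(M)\}$; closedness of the complement of $\iota(M)$ gives $\tilde\alpha(s^*) \notin \iota(M)$, while openness of $\iota(M)$ together with the definition of the infimum gives $\tilde\alpha([0,s^*)) \subset \iota(M)$, so $\tilde\alpha(s^*) \in \partial \iota(M)$ and, after reparametrisation, $\tilde\alpha(s^*) \in \partial^+ \iota(M)$. If instead $q \notin \iota(M)$, the symmetric argument applied to $\alpha$ with $t^* := \sup\{t : \alpha(t) \notin \iota(M)\}$ produces a point of $\partial^- \iota(M)$.

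I do not foresee a major obstacle: the whole argument reduces to careful point-set topology on the open set $\iota(M) \subset \mext$, combined with the standard local causal structure available in a convex normal chart. The only detail that warrants care is the choice of $U$: it must be small enough that all relevant radial geodesics stay in $U$ and the timelike cones at nearby points enjoy their usual openness and nesting properties, which is routine for convex normal neighborhoods.
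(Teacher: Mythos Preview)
Your overall strategy is the standard one and matches the approach in the cited references (the present paper states this lemma without proof, deferring to Sbierski and Galloway--Ling). However, there is a genuine technical gap: you invoke a convex normal neighbourhood of $p$, the exponential map, and radial geodesics, but $p \in \partial\iota(M) \subset \mext$ and the metric $\gext$ is only $C^{0}$. For merely continuous metrics the geodesic equation is not well-posed, the exponential map is not defined, and convex normal neighbourhoods need not exist. As written, your argument therefore does not go through in the regularity class under consideration.

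The fix is routine once identified: replace the convex normal neighbourhood by a coordinate chart around $p$ in which the components of $\gext$ are uniformly close to those of the Minkowski metric (such charts exist for any $C^{0}$ Lorentzian metric, simply by continuity of the metric components after a linear change making $\gext|_{p}$ standard Minkowski at the origin). In such a chart, straight coordinate lines whose direction lies inside a slightly narrowed Minkowski cone are $\gext$-timelike. You may then choose $q$ on such a line to the past of $p$, and take $\tilde\alpha$ and $\alpha$ to be coordinate-straight timelike segments rather than radial geodesics; your infimum/supremum dichotomy then goes through verbatim. This is precisely how the original argument in Sbierski's paper handles the low regularity.
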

Further emptiness of one of these implies a rather nice structure for the other:
\begin{theorem}[Theorem 2.6 in \cite{GallowayLing}]\label{theo:pastboundary}
    Let $\iota$: $(M,g)\rightarrow (M_{\textrm{ext}},g_{\textrm{ext}})$ be a $C^{0}$-extension. If $\partial^{+}\iota(M)=\emptyset$, then $\partial^{-}\iota(M)$ is an achronal topological hypersurface.
\end{theorem}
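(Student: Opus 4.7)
The plan is to exploit the assumption $\partial^+\iota(M)=\emptyset$ through the ``last exit'' principle: any future directed timelike curve that starts in $\iota(M)$ and ends in $M_{\textrm{ext}}\setminus\iota(M)$ would, by openness of $\iota(M)$, produce a point of $\partial^+\iota(M)$ (namely, the supremum of times the curve still lies in $\iota(M)$), contradicting the hypothesis. Combined with a Sbierski-style chronological chart near a past boundary point, this will yield both achronality and the topological hypersurface structure.

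Concretely, I would fix $p\in\partial^-\iota(M)$ and choose a chart $(U,\phi)$ with $\phi(p)=0$ in which the continuous metric $g_{\textrm{ext}}$ is $C^0$-close to the Minkowski metric, so that the coordinate lightcones are uniformly close to the Minkowski ones and in particular vertical segments $\ell_x(t):=\phi^{-1}(t,x)$ are $g_{\textrm{ext}}$-timelike. After shrinking $U$ if necessary, the key local step is to establish:
\begin{enumerate}[label=(\alph*)]
    \item $I^+(p)\cap U\subset\iota(M)$;
    \item $I^-(p)\cap U\cap\iota(M)=\emptyset$.
\end{enumerate}
For (a), any $q\in I^+(p)\cap U$ can be reached by concatenating an initial segment of the f.d.t.l.\ curve $\gamma$ witnessing $p\in\partial^-\iota(M)$ (which enters $\iota(M)$ immediately) with a f.d.t.l.\ curve from $\gamma(\varepsilon)\in\iota(M)$ to $q$, the latter existing by openness of the chronological relation for continuous metrics. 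If $q\notin\iota(M)$, the last-exit argument on this concatenation produces a point of $\partial^+\iota(M)$, contradicting the hypothesis. For (b), if some $q\in I^-(p)\cap \iota(M)$ existed, a f.d.t.l.\ curve from $q$ to $p$ would start in $\iota(M)$ and end at $p\notin\iota(M)$, triggering the same last-exit contradiction.

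Achronality of $\partial^-\iota(M)$ then follows quickly: if $p,q\in\partial^-\iota(M)$ were timelike related with $p\ll q$, a f.d.t.l.\ curve from $p$ to $q$ enters $I^+(p)\cap U\subset\iota(M)$ immediately after leaving $p$ and then reaches $q\in\partial\iota(M)$, producing one more forbidden future boundary point. For the hypersurface structure, for each sufficiently small spatial $x$, the timelike segment $\ell_x$ starts in $I^-(p)\cap U\subset M_{\textrm{ext}}\setminus\iota(M)$ and ends in $I^+(p)\cap U\subset\iota(M)$, so there is a well-defined entry time $t_x:=\inf\{t:\ell_x(t)\in\iota(M)\}\in(-\varepsilon,\varepsilon)$. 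Uniqueness follows from achronality, and applying the last-exit argument to $\ell_x|_{[s,t']}$ for $t_x<s<t'$ shows that $\ell_x((t_x,\varepsilon])\subset\iota(M)$, so $(t_x,x)\in\partial^-\iota(M)$ is witnessed by $\ell_x$ itself.

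Continuity of the graph $x\mapsto t_x$ follows from openness of $\iota(M)$ (upper semicontinuity above the graph) together with the local analysis (b) applied at every nearby past boundary point (lower semicontinuity), since each such boundary point admits its own chronological chart. The main technical obstacle is that one works entirely in the $C^0$-setting, which rules out smooth tools such as geodesic exponential maps; every step must rely instead on openness of the chronological relation for continuous Lorentzian metrics and on the existence of uniformly Minkowski-close chronological charts, and one must verify carefully that the cone-structure arguments localized at $p$ transfer uniformly to all nearby past boundary points in order to extract continuity of the graph function.
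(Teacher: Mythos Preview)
The paper under review does not give a proof of this statement: it is quoted verbatim as Theorem~2.6 from \cite{GallowayLing} and used as a black box (to guarantee that $\partial^-\iota(M)$ is a topological hypersurface so that dimension-counting arguments go through in Corollary~\ref{nosphericalextension} and Corollary~\ref{cor:noaxisymmetric_Flat_FLRW}). There is therefore no proof in this paper to compare your proposal against.

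That said, your sketch is the standard argument, essentially the one in \cite{GallowayLing} (which in turn adapts Sbierski's near-Minkowskian chart techniques from \cite{Sbierski2016}): the last-exit principle to exclude $\partial^+\iota(M)$-points, the local dichotomy $I^+(p)\cap U\subset\iota(M)$ and $I^-(p)\cap U\cap\iota(M)=\emptyset$, and the graph representation via entry times along vertical timelike segments. One point worth tightening is your lower-semicontinuity step for $x\mapsto t_x$: you cannot simply invoke openness of the complement of $\iota(M)$ (it need not be open), and transferring the dichotomy (b) ``to all nearby past boundary points'' presupposes that the limit point you produce is already known to lie in $\partial^-\iota(M)$. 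The clean way---and what Galloway--Ling actually do---is to argue directly that a failure of lower semicontinuity produces a sequence of points in $\iota(M)$ converging into $I^-(p)\cap U$, and then to use openness of $I^-(p)$ together with the last-exit argument once more to manufacture a forbidden future boundary point. With that fix your outline matches the literature proof.
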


As pointed out in the introduction we will be focusing on  FLRW spacetimes, that is isotropic\footnote{We remark that \cite{Avalos} shows that spacetime isotropy is necessary for the desired rigidity properties. Furthermore, for the definition of \emph{cosmological spacetime} see, e.g. the introduction of the aforementioned paper.} cosmological spacetimes which are necessarily of the following form: 
\begin{definition}[FLRW spacetimes]\label{def:FLRW}
  A ($d+1$)-dimensional \emph{FLRW spacetimes} is a warped product of an open interval $I\subset \mathbb{R}$ with a complete and simply connected $d$-dimensional Riemannian manifold with constant scalar curvature $K$. Depending on the value of $K$, the FLRW spacetimes are divided in three subgroups:
    \begin{align}
     &\textrm{\emph{Spherical }($K=+1$):} & &M=I\times \mathbb{S}^{d}, & & g=-dt^{2}+a^{2}(t)\left(dr^{2}+\sin^{2}(r)d\Omega^{2}_{d-1}\right)\\
    &\textrm{\emph{Hyperbolic }($K=-1$):} & &M=I\times\mathbb{R}^{d}, & &g=-dt^{2}+a^{2}(t)\left(dr^{2}+\sinh^{2}(r)d\Omega^{2}_{d-1}\right) \\
    &\textrm{\emph{Flat/Euclidean} ($K=0$):} & &M=I\times\mathbb{R}^{d}, & &g= -dt^{2}+a^{2}(t)\left(dr^{2}+r^{2}d\Omega^{2}_{d-1}\right)
    \end{align}
   where the metrics $g$ are written in \emph{FLRW coordinates} $(t,r,\omega) \in I\times(0,\infty)\times \mathbb{S}^{d-1}$ and the \emph{scale factor} $a: I \rightarrow(0,\infty)$ is a smooth function. 

     If in addition it holds that $\lim_{t\to t_{\textrm{inf}}^{+}}a(t)=0$, where $t_{\textrm{inf}}\coloneqq \textrm{inf}(I)$, then we call $(M,g)$ a FLRW spacetime \emph{with a Big Bang as $t\to t_{\textrm{inf}}^{+}$} and if $t_{\textrm{inf}}>-\infty$ we say the spacetime has a Big Bang at finite time. FLRW spacetimes with $\lim_{t\to t_{\textrm{inf}}^{+}}a(t)\neq0$ will be accordingly referred to as \emph{FLRW spacetimes without a Big Bang}. If $t_{\textrm{inf}}=-\infty$ we say the spacetime is past eternal (with or without a Big Bang) and we note that the case of a Big Bang at finite time is qualitatively different from a past eternal FLRW spacetime having a Big Bang.
\end{definition}

Unless stated otherwise, we will always assume that $I=(0,\infty)$ and that there is a Big Bang as $t\to 0^+$. More general intervals $I\subset \mathbb{R}$ will only be considered briefly in Section \ref{sec:inext_overview}, when discussing some past eternal FLRW spacetimes.

We can further classify FLRW spacetimes with a Big Bang into FLRW spacetimes with particle horizon and FLRW spacetimes without particle horizon depending on the integrability of $\frac{1}{a(t)}$ as $t\to 0^{+}$.
\begin{definition}[Particle horizon]\label{def:particle_hor}
    Let $(M,g)$ be an FLRW spacetime. It is said to have a \emph{particle horizon} provided $\int_{0}^{1} \frac{1}{a(t')}dt'<\infty$. Otherwise, it has \emph{no particle horizon}.
\end{definition}

In the next section we will present some criteria which guarantee the ($C^{0}$- or $C^{0,1}_{\textrm{loc}}$-)inextendibility of FLRW spacetimes. These criteria mostly depend on the considered type of FLRW spacetime (flat, spherical or hyperbolic) and on properties related to the asymptotic behaviour of the scale factor. 
Furthermore, some examples of relevant $C^{0}$-extendible FLRW spacetimes will be discussed. As having a particle horizon or not is related to the rate at which $a(t)$ approaches zero it is perhaps not surprising that this will play a role in the inextendibility results we are considering. However, as the discussion and 
in particular Table \ref{table} will show, this relationship does not appear to be straightforward in the sense that having or not having a particle horizon always leads to stronger inextendibility results.

\paragraph*{Acknowledgements}  This article originally started from work on MvdBS' Masters thesis written at the University of Tübingen. We would like to thank Carla Cederbaum for her support and bringing this collaboration together. We would further like to thank Eric Ling for bringing some of these problems to our attention and stimulating discussions. 
MG acknowledges the support of the German Research Foundation through the SPP2026 ''Geometry at Infinity'' and the Cluster of Excellence EXC 2121 “Quantum Universe”, the University of Tübingen and the University of Potsdam. MvdBS thanks Carla Cederbaum for her financial support during the development of this research project, the Studienstiftung des deutschen Volkes for granting him a scholarship during his Master studies and for his PhD project, and Felix Finster for his support.

\section{Overview of the low-regularity inextendibility results for FLRW spacetimes}\label{sec:inext_overview}
The aim of this section is to briefly review the main inextendibility results that have been proven for the different types of $(d+1)$-dimensional FLRW spacetimes. In Table \ref{table} the discussed low regularity inextendibility results are collected. We note that we will largely  discuss future inextendibility and past inextendibility separately, focusing on past inextendibility. Of course time dual versions of all results hold as well, however future inextendibility results will always be for $I=(c,\infty)$, with $c\in\mathbb{R}$, and mostly depend on the behaviour of $a(t)$ as $t\to \infty$ so that their time duals will not be applicable to Big Bangs at finite time (and vice versa).

\subsection{Future inextendibility}

\par Before moving on to past inextendibility we start by stating a future $C^{0}$-inextendibility result, the proof of which is essentially (though in the case of (i) somewhat anachronistically as the original proof preceded the general result about timelike geodesic completeness implying $C^0$-inextendibility from \cite{Minguzzi2019}) based on showing that the given conditions on the scale factor already guarantee future timelike completeness. The importance for us  will be that future inextendibility guarantees that the past boundary satisfies some nice dimensional and causal properties via Theorem \ref{theo:pastboundary}.

\begin{theorem}[Theorem 3.2 in \cite{GallowayLing} and Theorem 3.5 in \cite{Sbierski2020}]\label{theo:nofuture}
    \label{FLRWfutureboundary}
    Let $(M,g)$ be a ($d+1$)-dimensional FLRW spacetime (with $d\geq1$) which satisfies one of the following conditions:
    \begin{enumerate}[label=(\roman*)]
        \item $(M,g)$ is a flat or hyperbolic FLRW spacetime and the scale factor has a sublinear growth, i.e. $a(t)\leq mt+b$ for all $t\in (0,\infty)$ with $m>0$ and $b\geq0$, and $a'(t)>0$ for all $t\in(0,\infty)$.
        \item $\int_{1}^{\infty}\frac{a(t)}{\sqrt{a(t)^{2}+1}}dt=\infty$.
    \end{enumerate} 
    Then, $(M,g)$ is future $C^{0}$-inextendible and given any past $C^{0}$-extension $\iota : M\rightarrow \mext$, the past boundary $\partial^{-}\iota(M)$ is an achronal topological hypersurface. 
\end{theorem}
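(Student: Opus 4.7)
The plan is to reduce the theorem to the general structural results already recalled in the introduction, most notably that a smooth timelike geodesically complete spacetime is $C^{0}$-inextendible \cite{Minguzzi2019} (or, equivalently for the globally hyperbolic setting, \cite{LingGalSbierski}), together with Theorem \ref{theo:pastboundary}. Concretely, I would first show that under either hypothesis (i) or (ii) the FLRW spacetime $(M,g)$ is future timelike geodesically complete. Since every FLRW spacetime is smooth and globally hyperbolic, this forces $\partial^{+}\iota(M) = \emptyset$ for every $C^{0}$-extension $\iota$, which is exactly future $C^{0}$-inextendibility. The second assertion, that for any past extension the past boundary is an achronal topological hypersurface, is then an immediate application of Theorem \ref{theo:pastboundary}.

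To verify future completeness I would parametrise a future-directed unit-speed timelike geodesic as $\gamma(\tau) = (t(\tau), \sigma(\tau))$, with $\sigma$ a curve in the spatial slice. The warped product structure and the Killing fields of the (homogeneous and isotropic) spatial slice yield the first integral $a(t)^{2}\,|\dot\sigma|_{h} = L$ for some constant $L \geq 0$, where $h$ denotes the spatial metric. Combined with the unit-speed normalisation this gives $\dot t(\tau) = \sqrt{1 + L^{2}/a(t)^{2}}$, so that a separation of variables identifies future completeness of $\gamma$ with the divergence of
\begin{equation*}
\int_{t_{0}}^{\infty} \frac{a(t)}{\sqrt{a(t)^{2}+L^{2}}}\, dt
\end{equation*}
as the upper limit is sent to $\infty$, and this must hold for every admissible value of $L \geq 0$.

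Hypothesis (ii) is exactly the case $L=1$; for other values of $L$ the pointwise bound $\sqrt{a^{2}+L^{2}} \leq \max(1,L)\sqrt{a^{2}+1}$ shows that the $L$-integrand is bounded below by a positive multiple of the $L=1$ integrand, so divergence propagates uniformly in $L$. Under hypothesis (i), the assumptions $a>0$ and $a'>0$ give $a(t) \geq a(t_{0}) > 0$ for $t \geq t_{0}$, while $a(t) \leq mt+b$ bounds the denominator, so the integrand is at least a positive multiple of $1/(mt+b)$ for large $t$ and the integral diverges. The main point of care, and what keeps the argument from being entirely automatic, is that it is $a/\sqrt{a^{2}+L^{2}}$ and not $a$ itself whose tail integral governs completeness, so the estimates must be carried out uniformly in the constant of motion $L$ in order to capture \emph{all} timelike geodesics and not just the comoving ones ($L=0$). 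Once this bookkeeping is settled, the reduction to \cite{Minguzzi2019} and Theorem \ref{theo:pastboundary} completes the proof.
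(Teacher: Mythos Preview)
Your proposal is correct and follows precisely the strategy the paper itself sketches in the sentence immediately preceding the theorem: reduce to future timelike geodesic completeness and then invoke the general structural results (\cite{Minguzzi2019} or \cite{LingGalSbierski}) together with Theorem~\ref{theo:pastboundary}. The paper does not give its own detailed proof---it only cites \cite{GallowayLing,Sbierski2020} and records that the argument ``is essentially \ldots\ based on showing that the given conditions on the scale factor already guarantee future timelike completeness''---and your write-up fills in exactly these details. Your derivation of the conserved quantity $a^{2}|\dot\sigma|_{h}=L$, the resulting proper-time integral $\int a/\sqrt{a^{2}+L^{2}}\,dt$, and the bound $\sqrt{a^{2}+L^{2}}\leq \max(1,L)\sqrt{a^{2}+1}$ for case~(ii) are all correct.

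Two small remarks. First, in case~(i) your argument works but is slightly roundabout: since $a'>0$ makes $a$ increasing and the function $x\mapsto x/\sqrt{x^{2}+L^{2}}$ is increasing, the integrand is already bounded below by the positive constant $a(t_{0})/\sqrt{a(t_{0})^{2}+L^{2}}$, so the sublinear growth is not actually needed for the completeness step (it was a hypothesis in the original direct argument of \cite{GallowayLing}, which predated \cite{Minguzzi2019}). Second, you are tacitly using a \emph{future}-only version of the completeness-implies-inextendibility principle (future timelike geodesic completeness $\Rightarrow \partial^{+}\iota(M)=\emptyset$), whereas the results in \cite{Minguzzi2019,LingGalSbierski} are stated for two-sided completeness; this one-sided variant does hold by the same methods, but it is worth flagging the small gap between what is cited and what is used.
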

Note that (as pointed out in \cite{Sbierski2020}) the condition that $\int_{1}^{\infty}\frac{a(t)}{\sqrt{a(t)^{2}+1}}dt=\infty$ is in particular satisfied if $\lim_{t\to\infty}a(t)\neq0$.

\subsection{Past inextendibility}
While there exist quite general future $C^{0}$-inextendibility results for the FLRW spacetimes, most past $C^{0}$-inextendibility results depend on the particular type of FLRW spacetime we are considering (i.e. spherical, hyperbolic or flat), on the existence of particle horizons or on the symmetry properties of the considered extensions. Before presenting some of the strongest past $C^{0}$-inextendibility results for each type of FLRW spacetimes, we state the following past $C^{0,1}_{\textrm{loc}}$-inextendibility result by Sbierski that applies to any FLRW spacetime with particle horizon.

\begin{theorem}[Theorem 3.7 in \cite{Sbierski2020}]
     Let $(M,g)$ be a ($d+1$)-dimensional FLRW spacetime ($d\geq1$) with particle horizon. Then $(M,g)$ is past $C^{0,1}_{loc}$-inextendible.
\end{theorem}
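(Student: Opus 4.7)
The plan is a proof by contradiction exploiting the fact that parallel transport remains well-defined in a $C^{0,1}_{\textrm{loc}}$ metric. Suppose $\iota:(M,g)\hookrightarrow (M_{\textrm{ext}},g_{\textrm{ext}})$ is a past $C^{0,1}_{\textrm{loc}}$-extension and fix $p\in\partial^{-}\iota(M)$ together with a future-directed timelike curve through $p$ witnessing that $p$ lies on the past boundary.

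First, I would set up a holonomy framework at the boundary. Because $g_{\textrm{ext}}$ is locally Lipschitz, Rademacher's theorem gives that its Christoffel symbols are $L^{\infty}_{\textrm{loc}}$, so parallel transport along any Lipschitz curve in $M_{\textrm{ext}}$ is well-defined as a Carath\'eodory solution of the linear parallel transport ODE, and depends continuously on the curve in the $C^{0}$-topology of curves. A standard Gr\"onwall-type estimate then yields that, in an auxiliary Riemannian frame near $p$, the holonomy around any loop of sufficiently small diameter based near $p$ is arbitrarily close to the identity.

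Second, I would use the particle horizon hypothesis to produce loops near $p$ that are small in $M_{\textrm{ext}}$ but cosmologically large in $M$. The finiteness of $\int_{0}^{1}\frac{1}{a(t')}dt'$ says that past null cones in $M$ have uniformly bounded comoving radius, so passing to conformal time $\tau=\int_{0}^{t}\frac{1}{a(t')}dt'$ displays the Big Bang as a proper achronal surface in which finite comoving distances fit. Hence inside any $M_{\textrm{ext}}$-neighbourhood $U$ of $p$ I can pick two future-directed timelike curves $\gamma_{1},\gamma_{2}\subset U$ limiting to $p$ whose pullbacks to $M$ land on a common slice $\{t=t_{0}\}\cap M$ at points whose comoving spatial distance is bounded below independently of $U$.

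Third, closing the loop by the spatial $g$-geodesic joining these two comoving endpoints in $\{t=t_{0}\}$ produces a loop in $M_{\textrm{ext}}$ whose diameter in $M_{\textrm{ext}}$ can be made arbitrarily small, but which contains a spatial arc in $M$ of definite comoving length. A direct computation using the warped-product connection shows that parallel transport of the comoving timelike frame along such a spatial arc produces a non-trivial boost/rotation whose magnitude is bounded below by an explicit function of $a(t_{0})$, $a'(t_{0})$ and the comoving length of the arc; the short timelike legs near $p$ contribute negligibly by the first step. Comparing with the first step's estimate for the holonomy of the same loop in $M_{\textrm{ext}}$ then gives the contradiction.

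The main obstacle is the quantitative comparison of two a priori different length scales: the $g_{\textrm{ext}}$-diameter of the loop near $p$ and the $g$-comoving length of the spatial arc in $M$. The particle horizon condition is exactly the structural hypothesis that decouples these scales, allowing a cosmologically long spatial arc to sit inside an arbitrarily small $M_{\textrm{ext}}$-neighbourhood of $p$, and it is this decoupling that powers the contradiction between the boundary holonomy estimate and the cosmological holonomy computation.
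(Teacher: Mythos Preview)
Your proposal is correct and takes essentially the same approach that the paper sketches (summarizing Sbierski's original argument): a Gr\"onwall-type estimate controls parallel transport in the Lipschitz extension, while an explicit warped-product computation along a family of loops approaching the past boundary---whose construction relies on the particle horizon to fit a spatial arc of definite comoving length into an arbitrarily small $M_{\textrm{ext}}$-neighbourhood of $p$---shows the holonomy cannot be so controlled. The only cosmetic difference is that the paper phrases the contradiction as ``unbounded versus uniformly bounded'' holonomy, whereas you phrase it as ``bounded away from the identity versus arbitrarily close to the identity''; these are equivalent manifestations of the same mechanism.
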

The proof is based on estimates involving blow-up of some local holonomy. More precisely Sbierksi shows that there exists a specific sequence of loops which approach the past boundary such that the parallel transport map  along them is unbounded. This contradicts Lipschitz extendibility because, provided the metric components and its first order derivatives are bounded, the parallel transport map along curves in a bounded domain with a uniformly bounded tangent vector is uniformly bounded (which Sbierski proves using a Grönwall's inequality type of argument, cf. \cite[Lemma 2.19]{Sbierski2020}).

With respect to the past $C^{0}$-inextendibility criteria for general FLRW spacetimes, note that we will focus on the case that $d\geq 2$. This is due to the fact that for $d=1$ any FLRW spacetime is past $C^{0}$-extendible (initially proven in Section 3.2 in \cite{GallowayLing}, see also Section 6 in \cite{Sbierski2022} or Section 1.2 in \cite{Sbierski2023}). 

\par The main $C^{0}$-inextendibility result available for spherical and hyperbolic FLRW spacetimes (without particle horizon) is the following.
\begin{theorem}[Theorem 1.5 and 1.6 in \cite{Sbierski2023}]\label{th:Sbierskis_inext}
    Let $(M,g)$ be a ($d+1$)-dimensional spherical FLRW spacetime (with $d\geq2$) without particle horizon. Assume one of the following conditions holds:
    \begin{enumerate}[label=(\roman*)]
        \item $(M,g)$ is a spherical ($K=+1$) FLRW spacetime.
        \item $(M,g)$ is a hyperbolic ($K=-1$) FLRW spacetime with $a(t)e^{\int_{t}^{1}\frac{1}{a(t')}dt'}\to \infty$ as $t\to0^{+}$.
    \end{enumerate}
    Then, $(M,g)$ is past $C^{0}$-inextendible.
\end{theorem}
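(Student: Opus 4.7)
The plan is to argue by contradiction. I would assume a past $C^0$-extension $\iota: (M,g)\hookrightarrow (M_{\textrm{ext}},g_{\textrm{ext}})$ exists and pick a point $p\in \partial^{-}\iota(M)$. Since these FLRW spacetimes satisfy condition (ii) of Theorem \ref{theo:nofuture} (as $a(t)\to\infty$ or $a(t)$ is bounded away from zero under the listed hypotheses), they are future $C^0$-inextendible, so by Theorem \ref{theo:pastboundary} the past boundary $\partial^{-}\iota(M)$ is an achronal topological hypersurface. I would then set up a ``Minkowski sandwich'' chart around $p$: a neighborhood $U\subset M_{\textrm{ext}}$ with coordinates in which $g_{\textrm{ext}}(p)$ is the Minkowski inner product and, after shrinking $U$, the $g_{\textrm{ext}}$-light cones lie strictly between cones of two constant Minkowski metrics $\eta_{\pm\varepsilon}$. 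This reduces local causal reasoning in $U$ to essentially Euclidean geometry.

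Next I would transport the boundary point back into $M$. Choose a $g_{\textrm{ext}}$-timelike curve $\gamma:[0,1]\to U$ with $\gamma(0)=p$ and $\gamma((0,1])\subset \iota(M)$; its pullback $\tilde\gamma=\iota^{-1}\circ\gamma$ is a future-directed timelike curve in $M$ whose FLRW time coordinate must satisfy $t(\tilde\gamma(s))\to 0^{+}$ as $s\to 0^{+}$, since $p$ lies on the \emph{past} boundary and the only way a timelike curve in an FLRW spacetime can leave towards the past is through the Big Bang. This is where the no-particle-horizon hypothesis $\int_{0}^{1}\tfrac{1}{a(t')}dt'=\infty$ enters: the past comoving radial reach of $\tilde\gamma(s)$ diverges as $s\to 0^{+}$. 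In the spherical case this immediately forces $I^{-}_{M}(\tilde\gamma(s))\cap\{t=t_{0}\}$ to sweep out the entire spatial slice $\mathbb{S}^{d}$ for $t_{0}$ small. In the hyperbolic case the extra condition $a(t)e^{\int_{t}^{1}1/a(t')\,dt'}\to\infty$ is needed to offset the exponential growth of hyperbolic balls so that the proper spatial reach, not merely the comoving reach, becomes large.

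With this in hand I would introduce a \emph{second} comoving observer $\sigma$ at a spatial point far from $\tilde\gamma$. By the reach argument above, $\sigma(t)\in I^{-}_{M}(\tilde\gamma(s))$ for all sufficiently small $s$ and $t$. Pushing forward, $\iota\circ\sigma$ is future-directed $g_{\textrm{ext}}$-timelike and, using the properness of $\iota$ together with the achronal-hypersurface structure of $\partial^{-}\iota(M)$, must accumulate at some $p'\in\partial^{-}\iota(M)$ as $t\to 0^{+}$. The causal containment transfers via $\iota$ to $\iota(\sigma(t))\in J^{-}_{M_{\textrm{ext}}}(\gamma(s))$, and passing to the limit $t,s\to 0^{+}$ yields $p'\in J^{-}_{M_{\textrm{ext}}}(p)$. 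If one can guarantee $p\neq p'$, this violates the achronality of $\partial^{-}\iota(M)$ and produces the desired contradiction.

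The hardest step is the last one: ensuring that the Minkowski-like chart around $p$ cannot accommodate the entire image $\iota(\sigma(t))$, i.e.\ that $p'\neq p$. This requires matching the huge proper spatial reach in $(M,g)$ near the Big Bang against the bounded proper spatial extent inside $U$, using $\iota$'s property of being a homeomorphism onto its image. I expect the spherical case to be the cleaner one, since compactness of $\mathbb{S}^{d}$ produces antipodal separation automatically; the hyperbolic case is where the growth assumption $a(t)e^{\int_{t}^{1}1/a\,dt'}\to\infty$ should be sharp, chosen exactly to guarantee that this separation survives the conformal rescaling by $a(t)$ and still exceeds the diameter of the Minkowski sandwich chart. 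Handling limits of causal relations cleanly through the $C^{0}$ regularity of $g_{\textrm{ext}}$ is a secondary technical hurdle that should be covered by standard upper semicontinuity arguments for $J^{-}$ once the sandwich chart is fixed.
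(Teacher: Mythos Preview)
First, note that this theorem is quoted from \cite{Sbierski2023} and the paper under review does not give its own proof; it only summarizes the strategy in one sentence: ``The proof of (i) uses ideas from \cite{Sbierski2016} and causality theoretic arguments while (ii) makes use of characterizing TIFs in these spacetimes and finding a parametrization for the past boundary and relating TIFs to points on the past boundary.''

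For the spherical case (i) your contradiction-via-second-observer outline is broadly in the spirit of the ``ideas from \cite{Sbierski2016}'' the paper alludes to (a spacelike-diameter type obstruction). For the hyperbolic case (ii), however, your route genuinely differs from the one described: Sbierski classifies the TIFs of the spacetime, builds a parametrization of the past boundary, and then matches points of $\partial^{-}\iota(M)$ with TIFs to obtain the contradiction. The growth condition $a(t)e^{\int_t^1 1/a\,dt'}\to\infty$ enters that argument through the TIF classification, not through a proper-distance comparison against a Minkowski-sandwich chart as you propose. So even at the level of strategy, your treatment of (ii) is not the one the paper reports.

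There are also concrete gaps in your sketch. Your opening claim that the hypotheses force condition (ii) of Theorem \ref{theo:nofuture} is unjustified: the assumptions of Theorem \ref{th:Sbierskis_inext} constrain only the behaviour of $a$ as $t\to 0^{+}$ and say nothing about $t\to\infty$, so you cannot invoke future inextendibility (and hence Theorem \ref{theo:pastboundary}) without adding hypotheses. More seriously, the assertion that $\iota\circ\sigma$ ``must accumulate at some $p'\in\partial^{-}\iota(M)$'' does not follow from anything available: in Definition \ref{def:classicalextension} ``proper'' means $\iota(M)\subsetneq M_{\textrm{ext}}$, not that $\iota$ is a proper map, so there is no a priori reason a second comoving observer should have any limit in $M_{\textrm{ext}}$, let alone one on the past boundary. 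Without that limit the achronality contradiction never gets off the ground. Sbierski's TIF machinery for (ii) is designed precisely to bypass this difficulty by working with ideal boundary points intrinsic to $(M,g)$ rather than trying to locate a second accumulation point inside $M_{\textrm{ext}}$.
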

The proof of (i) uses ideas from \cite{Sbierski2016} and causality theoretic arguments while (ii) makes use of characterizing TIFs\footnote{''TIF'' stands for terminal indecomposable future sets, that is future sets $P$ (i.e. sets satisfying $I^+(P)\subseteq P$) which cannot be decomposed into two open past proper subsets and does not coincide with the past of any point in $M$.} in these spacetimes and  finding a parametrization for the past boundary and relating TIFs to points on the past boundary.
We remark that in condition (ii) of the previous theorem, that $a(t)e^{\int_{t}^{1}\frac{1}{a(t')}dt'}\to \infty$ as $t\to0^{+}$ plays a very important role: if $a(t)e^{\int_{t}^{1}\frac{1}{a(t')}dt'}\to (0,\infty)$ as $t\to0^{+}$, $\lim_{t\to0^{+}}a'(t)=1$ and $a(t)$ has a sublinear growth (i.e. $(M,g)$ is a so called \emph{Milne-like spacetime}), then the hyperbolic FLRW spacetimes do admit a $C^{0}$-extension (cf. the next subsection).

The case of flat FLRW spacetimes is more open: While there is a recent paper by Ling, \cite{Ling2024}, based on Sbierski's work in the spherical and hyperbolic setting that does consider the flat case, he looks at past eternal flat FLRW spacetimes with a Big Bang as $t\to -\infty$ as opposed to a Big Bang at finite time.

\begin{theorem}[Theorem 1.1 in \cite{Ling2024}]
Let (M, g) be a  flat simply connected FLRW spacetime $M=\mathbb{R}\times\mathbb{R}^{3}$ with $d\geq 2$. Moreover, suppose the scale factor satisfies that
\begin{enumerate}[label=(\roman*)]
    \item $\lim_{t\to-\infty}a(t)=0$.
    \item $\lim_{t\to-\infty}a(t)\int_{t}^{1}\frac{1}{a(t')}dt'=\infty$
\end{enumerate}
Then $(M,g)$ is past $C^{0}$-inextendible.
\end{theorem}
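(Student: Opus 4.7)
The strategy is to adapt the TIF-based argument of Sbierski from \cite{Sbierski2023} used for Theorem \ref{th:Sbierskis_inext}(ii) in the hyperbolic case to the flat past-eternal setting at hand. Suppose for contradiction that a past $C^0$-extension $\iota: (M,g) \hookrightarrow (\mext, \gext)$ exists, pick a past boundary point $p \in \partial^-\iota(M)$, and associate to it the canonical nonempty open future set $F_p := \iota^{-1}(I^+(p)\cap \iota(M)) \subseteq M$. The aim is to show that the structure of $F_p$, forced by hypothesis~(ii), is incompatible with $p$ being a boundary point of a $C^0$-extension.

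A natural framework to analyze such future sets is the Geroch--Kronheimer--Penrose ideal boundary picture: $F_p$ is a terminal indecomposable future set (TIF) of $(M,g)$, i.e.\ it is indecomposable as an open future set and not of the form $I^+(q)$ for any $q \in M$. The first technical step is therefore to classify all TIFs of a past-eternal flat FLRW spacetime with $a(t)\to 0$ as $t\to-\infty$. This is facilitated by passing to conformal time $\eta(t) := \int_t^1 dt'/a(t')$, in which the metric takes the form $g = a^2(t(\eta))(-d\eta^2 + dr^2 + r^2 d\Omega^2_{d-1})$ --- conformally flat --- and past-inextendible timelike curves are precisely those whose $\eta$-coordinate tends to $+\infty$ (this uses the no-particle-horizon consequence of hypothesis~(ii)). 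In this picture one would then parametrize TIFs by an asymptotic datum at past conformal infinity.

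The decisive input of hypothesis~(ii), $a(t)\int_t^1 dt'/a(t') \to \infty$, is that the \emph{physical} radius of the past light cone of any fixed point in $(M,g)$ tends to infinity as one approaches the Big Bang. Informally this says that past conformal infinity degenerates to a single ``point'' in the physical metric, leaving a very rigid structure for the ideal past boundary. Once this rigidity is in place, I would combine it with Theorem \ref{theo:pastboundary} --- which forces $\partial^-\iota(M)$ to be an achronal topological hypersurface of dimension $d\geq 2$ whenever $\partial^+\iota(M) = \emptyset$ --- to extract the desired contradiction: the very restricted parametrization of past boundary points by TIFs and the hypersurface dimension of $\partial^-\iota(M)$ are mutually incompatible. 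The complementary case $\partial^+\iota(M)\neq \emptyset$ would be reduced to the above, either by a direct localized hypersurface statement near $p$ or by an independent argument on the future side.

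The main obstacle I anticipate is the precise TIF classification: while past-inextendible timelike curves of comoving type are easily seen to have $I^+(\gamma) = M$ under hypothesis~(ii), ``diagonal'' past-inextendible timelike curves whose conformal radial coordinate $r(s)$ grows at nearly the same rate as $\eta(s)$ could in principle produce additional, proper TIFs. Ruling these out --- or otherwise identifying them with the comoving ones --- will require a careful quantitative use of hypothesis~(ii), much as hypothesis~(ii) of Theorem~\ref{th:Sbierskis_inext} is used in the hyperbolic case. A secondary issue, familiar from \cite{Sbierski2016}, is that without global hyperbolicity of the extension, $I^+(p)\cap \iota(M)$ might strictly exceed the chronological future computed inside $M$ via causal shortcuts through $\mext\setminus \iota(M)$; this is typically handled by restricting to sufficiently small causally well-behaved neighborhoods of $p$ inside $\mext$.
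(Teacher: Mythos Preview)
The paper does not contain a proof of this theorem; it is quoted as a result from \cite{Ling2024} in the literature review of Section~\ref{sec:inext_overview}, accompanied only by the remark that Ling's approach is ``based on Sbierski's work in the spherical and hyperbolic setting.'' There is therefore no in-paper proof to compare against. Your proposed strategy---adapting the TIF-based machinery of \cite{Sbierski2023} from the hyperbolic to the flat past-eternal setting---is exactly the approach the paper attributes to \cite{Ling2024}, so at the level of overall method your proposal is on target.

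As a proof outline, however, your proposal leaves the decisive step unspecified. The sentence ``the very restricted parametrization of past boundary points by TIFs and the hypersurface dimension of $\partial^{-}\iota(M)$ are mutually incompatible'' is precisely where the real work lies: one needs an actual map from (a piece of) $\partial^{-}\iota(M)$ to the space of TIFs, together with enough injectivity and continuity to contradict the hypersurface dimension. In \cite{Sbierski2023} this is done via a careful local parametrization of the past boundary and a quantitative identification of which TIFs can arise; your proposal acknowledges the TIF-classification difficulty for ``diagonal'' curves but does not indicate how hypothesis~(ii) is used to resolve it, nor how the boundary-to-TIF correspondence is set up. Your secondary concern about causal shortcuts through $\mext\setminus\iota(M)$ is valid and, as you say, is handled by working in small causally convex neighborhoods in $\mext$. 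Finally, note that the theorem as stated imposes no condition on $a(t)$ as $t\to+\infty$, so the reduction to $\partial^{+}\iota(M)=\emptyset$ is not automatic from Theorem~\ref{theo:nofuture}; this case genuinely requires either an additional hypothesis or a localized argument near $p$ that does not rely on global emptiness of the future boundary.
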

As already pointed out by Ling these two conditions on the scale factor imply that these spacetimes have no particle horizon. Similarly to the hyperbolic setting condition (ii) appears to be sharp: If the scale factor satisfies (i) and the spacetime has no particle horizon  but $\lim_{t\to-\infty}a(t)\int_{t}^{1}\frac{1}{a(t')}dt'\in (0,\infty)$ then  it does admit a $C^0$-extension (cf. \cite{GeshnizjaniLingQuintin} and the next subsection).

Returning to our discussion of results on FLRW spacetimes with a Big Bang at finite time  there unfortunately does at this point not exist a general $C^{0}$-inextendibility result ('general' in the sense of only imposing conditions on the scale factor) for flat FLRW spacetimes. In the next section we will present a $C^{0}$-inextendibility result by Ling and Galloway \cite{GallowayLing} that restricts the symmetry class of the metric of the extended spacetime. In particular, they prove the following result:
\begin{corollary}[Corollary 4.2 in \cite{GallowayLing}]
    Let $(M,g)$ be a flat FLRW spacetime with $a'(0) \in (0,\infty]$ and which is future inextendible (for instance because it is future timelike geodesically complete or because it satisfies condition (i) in Theorem \ref{theo:nofuture}). Then there exists no natural strongly spherically symmetric $C^{0}$-extension of $(M,g)$.
\end{corollary}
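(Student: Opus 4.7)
My plan is to argue by contradiction. Suppose $\iota\colon (M,g)\hookrightarrow (\mext,\gext)$ is a natural strongly spherically symmetric past $C^{0}$-extension. First, I would invoke the future inextendibility hypothesis together with Theorem \ref{theo:pastboundary} to conclude that $\partial^{-}\iota(M)$ is a non-empty achronal topological hypersurface. This gives the geometric structure on the past boundary that I will exploit below.

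The second step is to pass to \emph{areal-radius} coordinates $(t,R,\omega)$ with $R:=a(t)\,r$. A direct computation using $a(t)\,dr = dR - H(t)\,R\,dt$, where $H:=a'/a$ is the Hubble function, puts the flat FLRW metric in the form
\[
g \;=\; -\bigl(1 - H(t)^{2} R^{2}\bigr)\,dt^{2} \;-\; 2\,H(t)\,R\,dt\,dR \;+\; dR^{2} \;+\; R^{2}\,d\Omega^{2},
\]
so in particular $g_{tR}=-H(t)\,R=-a'(t)\,r$. I expect the ``natural strongly spherically symmetric'' hypothesis, together with the uniqueness properties of symmetry-adapted coordinates referred to in the introduction, to force the $(t,R,\omega)$-chart to extend continuously to a neighborhood of any chosen point $p\in\partial^{-}\iota(M)$ in $\mext$, with metric components given by continuous functions of $(t,R,\omega)$ on the enlarged domain.

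For the third step, I would select a point $p\in\partial^{-}\iota(M)$ with $R(p)>0$. Since $\partial^{-}\iota(M)$ is three-dimensional while the fixed-point set of the $SO(3)$-action on a four-dimensional spherically symmetric spacetime is at most two-dimensional, such a $p$ exists. Pick a sequence $\iota(t_{n},r_{n},\omega_{n})\to p$; then $t_{n}\to 0$ and $R_{n}:=a(t_{n})\,r_{n}\to R(p)>0$, and since $a(t_{n})\to 0$ this forces $r_{n}\to\infty$. The hypothesis $a'(0)\in(0,\infty]$ implies $a'(t_{n})$ is bounded below by a positive constant for all large $n$, so
\[
g_{tR}(t_{n},R_{n},\omega_{n}) \;=\; -a'(t_{n})\,r_{n} \;\longrightarrow\; -\infty,
\]
contradicting the continuity of $\gext$ at $p$ in the extended chart.

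The hardest step will be the second one: rigorously turning the ``natural strongly spherically symmetric'' assumption into the concrete statement that the $(t,R,\omega)$-chart extends continuously, with the above coordinate expression for the metric, to a neighborhood of the selected boundary point. A generic $C^{0}$-extension need not respect any particular coordinate system, and one could in principle imagine a wilder coordinate change absorbing the blow-up of $g_{tR}$. The restriction to the ``natural strongly spherically symmetric'' class is exactly what should rule this out, by leveraging the uniqueness of symmetry-adapted coordinates in this subclass; making this precise is where the real work lies.
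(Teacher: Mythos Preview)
Your proposal has a genuine gap in step two that stems from a misreading of what ``strongly spherically symmetric'' means in this paper. By Definition~\ref{spherical} (and Remark~\ref{naturalspherical_new}), strongly spherically symmetric coordinates are required to put the metric in the \emph{diagonal} form $-F\,dT^{2}+G\,dR^{2}+R^{2}\,d\Omega^{2}$, with no $dT\,dR$ cross term. Your areal $(t,R,\omega)$ chart has $g_{tR}=-a'(t)\,r\neq 0$, so it is \emph{not} a strongly spherically symmetric chart in the sense used here, and Definition~\ref{def:natural_str_spher_ext} gives you no reason to expect it to extend to the boundary. What the ``natural strongly spherically symmetric'' hypothesis actually provides is that the \emph{diagonalizing} coordinates $(T,R,\omega)$ from Theorem~\ref{theo:unique_spherical} (with $T=T(t,r)$ a specific nontrivial function) extend across $\partial^{-}\iota(M)$; your $(t,R,\omega)$ chart is a different object, and the uniqueness statement does not apply to it.

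The paper's proof uses the extended $(T,R,\omega)$ chart directly: by Theorem~\ref{theo:unique_spherical}(2) and \eqref{eq:G_coeff}, the diagonal coefficient $G=1/(1-r^{2}a'(t)^{2})$ tends to $0$ along any timelike curve approaching $p\in\partial^{-}\iota(M)$ with $R\to R_{\textrm{ext}}(p)\in(0,\infty)$, and since $G_{\textrm{ext}}=G$ on the overlap this forces $G_{\textrm{ext}}(p)=0$, contradicting non-degeneracy. Note that the underlying blow-up $r\,a'(t)\to\infty$ you identified is exactly the same phenomenon driving $G\to 0$; the point is that it must be packaged through the coefficient $G$ in the diagonalized chart---which is what the hypothesis controls---rather than through a cross term in a chart the hypothesis says nothing about. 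Your dimensional argument for finding $p$ with $R(p)>0$ is correct and matches the paper's.
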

Note that we have adapted the statement slightly from its original in \cite{GallowayLing}, where they demand condition (i) in Theorem \ref{theo:nofuture}, to directly focus on the relevant consequence of this condition in the proof, namely a way to establish emptiness of the future boundary, which we now know to follow e.g. from future timelike geodesic completeness.
We will introduce the relevant concepts and discuss this result more in depth in the next section. Note that in \cite{GallowayLing} an analogous result was also proven for hyperbolic FLRW spacetimes with the additional assumption that $\lim_{t\to0^{+}}a'(t)\neq 1$:
\begin{corollary}[Corollary 4.4 in \cite{GallowayLing}]\label{cor:nospherical_hyp_FLRW}
    Let $(M,g)$ be a future inextendible hyperbolic FLRW spacetime with $a'(0)\in [0,\infty]$ and $a'(0)\neq 1$. Then there exists no natural strongly spherically symmetric 
    $C^{0}$-extension of $(M,g)$.
\end{corollary}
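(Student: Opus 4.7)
The plan is to mirror the strategy of Galloway--Ling for the flat case (Corollary 4.2) and adapt each step to the hyperbolic geometry. Suppose for contradiction that there exists a natural strongly spherically symmetric $C^0$-extension $\iota:(M,g)\hookrightarrow(\mext,\gext)$. Since $(M,g)$ is future inextendible we have $\partial^{+}\iota(M)=\emptyset$, so Theorem \ref{theo:pastboundary} forces $\partial^{-}\iota(M)$ to be a nonempty achronal topological hypersurface. The naturalness hypothesis, combined with the $SO(3)$ action, provides a distinguished axis of symmetry in $\iota(M)$ whose closure meets $\partial^{-}\iota(M)$; fix a point $p\in\partial^{-}\iota(M)$ on this extended axis.

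Next I would use the natural strongly spherically symmetric structure to introduce a preferred chart $(T,R,\omega)$ on an open neighborhood of $p$ in $\mext$, where $R$ is the area radius of the $SO(3)$-orbits. Uniqueness of the area-radius function forces $R=a(t)\sinh(r)$ on $\iota(M)$, which is the hyperbolic analogue of the flat-case identity $R=a(t)r$, while the naturalness condition determines the ``central'' time coordinate $T$ up to an additive constant. From here one writes down the transition Jacobian $\partial(T,R)/\partial(t,r)$ and expresses $\gext$ in the $(T,R)$-coordinates, so that continuous extendibility of $\gext$ across $\partial^{-}\iota(M)$ becomes a constraint on the coefficients of the pullback metric in the limit $(t,r)\to(0,0)$ along the extended axis.

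The crux is then to evaluate this limit along past-directed timelike curves meeting $p$. Near the axis one has $\sinh(r)=r+O(r^{3})$ and $\cosh(r)=1+O(r^{2})$, so at leading order the hyperbolic metric is indistinguishable from the flat FLRW metric and one can reuse the core computation from the proof of Corollary 4.2 to obtain limits for $\gext(\partial_{T},\partial_{T})$ and $\gext(\partial_{R},\partial_{R})$. However, the subleading $\cosh$-corrections modify the compatibility relation: continuity of $\gext$ together with the Lorentzian non-degeneracy condition turns out to be satisfied only when $\lim_{t\to 0^{+}}a'(t)=1$, corresponding precisely to the Milne-like regime in which the hyperbolic FLRW is locally isometric to a neighborhood of the origin in Minkowski space (and which is known to admit a $C^{0}$-extension). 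Since we are assuming $a'(0)\in[0,\infty]\setminus\{1\}$, this yields the desired contradiction.

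The main obstacle is carrying out the limiting analysis using only $C^{0}$-regularity of $\gext$: no derivatives of $\gext$ are available, so the constraint $a'(0)=1$ must be extracted purely from continuous matching of coefficients together with the rigidity provided by the ``natural'' and ``strongly spherically symmetric'' hypotheses. A secondary subtlety is treating the endpoint cases $a'(0)=0$ and $a'(0)=\infty$ uniformly, since in these limits some Jacobian entries degenerate and the argument has to be run in appropriately rescaled coordinates rather than by direct substitution; this is presumably the reason the hyperbolic statement requires the explicit exclusion $a'(0)\neq 1$ while allowing $a'(0)=0$, in contrast to the flat case.
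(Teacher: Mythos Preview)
Your approach has a fundamental misdirection. You choose $p$ on the \emph{axis} of symmetry and study the limit $(t,r)\to(0,0)$, invoking the Taylor expansion $\sinh r = r + O(r^3)$ near $r=0$. But the actual argument (both in \cite{GallowayLing} and as reproduced in this paper for the flat case, Corollary~\ref{nosphericalextension}) does the opposite: one picks $p\in\partial^-\iota(M)$ at a point where the strongly spherically symmetric chart is valid, so in particular $R_{\textrm{ext}}(p)\in(0,\infty)$. Since $R=a(t)\sinh r$ and $a(t)\to 0$, a finite positive limit for $R$ along a timelike curve approaching $p$ forces $r\to\infty$, not $r\to 0$. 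Your small-$r$ expansion and the claimed reduction to the flat case near the axis are therefore irrelevant to the argument.

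The correct mechanism is a one-line degeneration: in the hyperbolic case one computes (see Remark~\ref{rem:nospherical_hyperbolic})
\[
G(t,r)=\frac{1}{\cosh^2 r - a'(t)^2\sinh^2 r}=\frac{a(t)^2}{R^2\bigl(1-a'(t)^2\bigr)+a(t)^2},
\]
and if $\lim_{t\to 0^+}R(\gamma(t))=R_0\in(0,\infty)$ and $a'(0)\neq 1$ then the denominator tends to $R_0^2(1-a'(0)^2)\neq 0$ (or diverges when $a'(0)=\infty$) while the numerator tends to $0$, hence $G\to 0$. This makes $g_{\textrm{ext},s}$ degenerate at $p$, contradicting non-degeneracy of the extended metric. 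The exclusion $a'(0)\neq 1$ is thus not a delicate compatibility condition extracted from matching, as you suggest, but simply the requirement that $1-a'(0)^2\neq 0$ so the denominator does not vanish; when $a'(0)=1$ one gets $G\to 1$ and no contradiction arises (consistently with the Milne-like extensions). The cases $a'(0)=0$ and $a'(0)=\infty$ require no rescaling and are handled by the same formula.
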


\par Building on the results by Galloway and Ling, in Section \ref{sec:cylindrical} we will prove that if flat and hyperbolic FLRW spacetimes satisfying some conditions on the scale factor $a$ do admit a $C^{0}$-extension, then the extended spacetime cannot belong to a certain subclass of axisymmetric spacetimes (for the proof see Corollaries \ref{cor:noaxisymmetric_Flat_FLRW} and \ref{cor:noaxisymmetric_Hyp_FLRW}):
\begin{theorem}\label{theo:noaxissymmetric}
    Let $(M,g)$ be a future inextendible (4-dimensional) FLRW spacetime. Assume one of the following conditions holds:
    \begin{enumerate}[label=(\roman*)]
        \item $(M,g)$ is a flat FLRW spacetime and the scale factor $a$ satisfies that $a'(0)\in (0,\infty]$.
        \item $(M,g)$ is a hyperbolic FLRW spacetime and the scale factor $a$ satisfies that $a'(0)\in [0,\infty]$ and $a'(0)\neq1$.
    \end{enumerate}
    Then, there is no natural strongly axisymmetric $C^{0}$-extension of $(M,g)$ compatible with the strongly spherically symmetric coordinates appearing in \cite{GallowayLing}. 
\end{theorem}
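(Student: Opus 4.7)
My plan is to argue by contradiction along the lines of the Galloway--Ling proof of Corollary 4.2 in \cite{GallowayLing}, using the assumed compatibility with the strongly spherically symmetric coordinates to reduce the axisymmetric setting to a situation where the same rigidity of coordinate changes can be exploited. Accordingly, suppose a natural strongly axisymmetric $C^{0}$-extension $\iota\colon(M,g)\hookrightarrow(\mext,\gext)$ compatible with the strongly spherically symmetric coordinates of \cite{GallowayLing} exists. Since $(M,g)$ is future inextendible, Theorems \ref{theo:nofuture} and \ref{theo:pastboundary} give that $\partial^{+}\iota(M)=\emptyset$ and that $\partial^{-}\iota(M)$ is a non-empty achronal topological hypersurface. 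Fix $p\in\partial^{-}\iota(M)$ and an adapted neighbourhood $U\subset\mext$ of $p$ together with a future-directed timelike curve $\gamma$ starting at $p$ and entering $\iota(M)$; pulling $\gamma$ back via $\iota^{-1}$ one can arrange for it to be constant in the angular coordinates of the FLRW chart on $(M,g)$.

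Next, I would unpack the hypothesis that the extension is strongly axisymmetric and compatible with the Galloway--Ling coordinates: the axial Killing vector field of $\gext$ should restrict on $\iota(M)$ to one of the rotational Killing vector fields of the FLRW spherical symmetry, so that its orbits correspond to the circles $\mathbb{S}^{1}\hookrightarrow\mathbb{S}^{d-1}$ in the angular factor of the FLRW metric. In particular, the orbit length on $\iota(M)$ is a fixed explicit function of $(t,r)$ (namely a multiple of $a(t)r$ in the flat case, or of $a(t)\sin(r)$/$a(t)\sinh(r)$ in the spherical/hyperbolic cases), and continuity of $\gext$ forces this function to extend continuously to $p$.

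The heart of the proof is then to establish a cylindrical analogue of Galloway--Ling's uniqueness of the coordinate change $(t,r)\mapsto$ (axisymmetric chart) near $p$. Because both the FLRW foliation by constant-$t$ slices and the axial Killing orbits extend $C^{0}$ through $\partial^{-}\iota(M)$, one can show that the induced map on the $(t,r)$--quotient is a homeomorphism onto a neighbourhood of its image, extending continuously across the boundary. Plugging in the assumed asymptotic behaviour of $a$ --- $a'(0)\in(0,\infty]$ in the flat case, and $a'(0)\in[0,\infty]\setminus\{1\}$ in the hyperbolic case (as in Corollary \ref{cor:nospherical_hyp_FLRW}) --- should yield that either the FLRW time coordinate $t$ or the quotient map fails to be continuous at the putative image of $p$, producing the desired contradiction exactly as in \cite{GallowayLing}.

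The main obstacle I anticipate lies in the middle step: making precise how the weaker axial $SO(2)$-symmetry of the extension, combined with compatibility with the spherically symmetric FLRW coordinates, nevertheless supplies enough rigidity to force the orbit-length function to behave as in the fully spherically symmetric case. Once this rigidity is secured, the cylindrical adaptation of Galloway--Ling's planar coordinate uniqueness argument should go through with only minor bookkeeping changes, and the final contradiction with the boundary behaviour of $a$ will follow as in the original paper.
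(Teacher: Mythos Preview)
Your proposal has the right global shape (contradiction via a curve into $\partial^{-}\iota(M)$, achronality of the past boundary from future inextendibility), but the mechanism you describe is not the one that actually works, and the ``main obstacle'' you flag is in fact dissolved by the very definition of a natural strongly axisymmetric extension rather than by any Killing-field argument.

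Concretely: the paper does \emph{not} argue via orbit lengths or via continuity/discontinuity of $t$ or of a quotient map. Instead it first proves (Theorem~\ref{uniquecylindrical}) that from strongly spherically symmetric coordinates $(T,R,\theta,\varphi)$ there is an essentially unique change $(R,\theta)\mapsto(z,\rho)$ to strongly axisymmetric form, with $\rho=R\sin\theta$ and the explicit relation
\[
C=\frac{G}{G\cos^{2}\theta+\sin^{2}\theta}
\]
between the axisymmetric $d\rho^{2}$-coefficient $C$ and the spherically symmetric $dR^{2}$-coefficient $G$. The hypothesis ``compatible with the Galloway--Ling coordinates'' is then \emph{defined} (Definition~\ref{naturalcylindrical}) to mean that the extension's axisymmetric chart agrees with $\psi_{a}\circ\psi_{s}$ on the overlap; no rigidity has to be extracted from an $SO(2)$-action because the compatibility is assumed, not derived. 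With this in hand the contradiction is immediate: since $\rho_{\textrm{ext}}\in(0,\infty)$ on $U$, one gets $\lim R(\gamma)\in(0,\infty)$; Galloway--Ling's Theorem~\ref{theo:unique_spherical} (flat case) or its hyperbolic analogue then forces $G\to 0$ along $\gamma$, whence $C\to 0$ by the displayed formula, so $g_{\textrm{ext},a}$ degenerates at $p$. Your claimed orbit length ``$a(t)r$'' also misses the $\sin\theta$ factor, which is precisely what distinguishes the axisymmetric radius $\rho$ from the spherical radius $R$ and what makes the two-step coordinate change $(t,r)\xrightarrow{\psi_{s}}(T,R)\xrightarrow{\psi_{a}}(T,z,\rho)$ the right structure, rather than a direct $(t,r)\mapsto(\text{axisymmetric})$ map.
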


\begin{table}
\centering
\begin{tblr}{
  row{1} = {c},
  cell{2}{1} = {r=2}{c},
  cell{2}{3} = {r=2}{},
  cell{4}{1} = {r=3}{c},
  cell{4}{3} = {r=5}{},
  cell{7}{1} = {r=2}{c},
  vlines,
  hline{1-2,4,9} = {-}{},
  hline{3,5-6,8} = {2}{},
  hline{7} = {1-2}{},
}
K   & Past inextendibility (Big Bang at finite time $t\to0^{+}$)    & Future inextendibility ($d\geq 1$)     \\
+1  & {\textbf{\textbf{No particle horizon:}}\\$C^0$-inext. ($d\geq 2$)}                                                                                          & $C^0$-inext. if $\int^\infty_1\frac{a(t)}{\sqrt{a(t)^{2}+1}}dt=\infty$~~                                  \\
    & {\textbf{\textbf{With particle horizon:}}\\ 
    $C^{0,1}_\textrm{loc}$-inext. ($d\geq 1$)}                                                                         &                                                                                                           \\
~-1 & {\textbf{\textbf{No particle horizon:}}\\$C^0$-inext. if $a(t)e^{\int_{t}^{1}\frac{1}{a(t')}dt'}\to \infty$ as $t\to0^{+}$ ($d\geq 2$)}                               & {$C^0$-inext.\ if $\int^\infty_1\frac{a(t)}{\sqrt{a(t)^{2}+1}}dt=\infty$\\ \\ \\ $C^0$-inext.\ if $a(t)\leq mt+b$, $a'(t)>0$} \\
    & {\textbf{With particle horizon:}\\$C^{0,1}_\textrm{loc}$-inext. ($d\geq 1$)}                                                                                  &                                                                                                           \\
    & {\textbf{\textbf{\textbf{\textbf{With symmetry assumptions:}}}}\\No nat. str. spher. symm. $C^{0}$-extensions ($d\geq2$)\\No nat. str. axisymm. $C^0$-extensions ($d=3$).\textbf{\textbf{\textbf{\textbf{}}}}}        &                                                                                                           \\
0   & {\textbf{\textbf{With symmetry assumptions:}}\\ No nat. str. spher. symm. $C^0$-extensions ($d\geq2$)\\No nat. str. axisymm. $C^0$-extensions ($d=3$)} &                                                                                                           \\
    & {\textbf{\textbf{With particle horizon:}}\\$C^{0,1}_\textrm{loc}$-inext. ($d\geq 1$)}                                                                         &                                                                                                           
\end{tblr}
\caption{Main low regularity inextendibility results for the different types of $(d+1)$-dimensional FLRW spacetimes with a Big Bang at finite time $t\to0^{+}$. Recall that for $d=1$ all FLRW spacetimes are past $C^{0}$-extendible.}
\label{table}
\end{table}

\subsection{Some important examples of \texorpdfstring{$C^{0}$}{C0}-extendible FLRW spacetimes}\label{sec:examples}
Before closing this section, we present some of the main examples for $C^0$-extendible hyperbolic and flat FLRW spacetimes: The common ground of these examples is that their scale factor behaves to some degree similarly to the behaviour of the scale factor when expressing certain proper open subsets of Minkowski or de Sitter spacetime in FLRW form. We want to reiterate that Minkowski and de Sitter (as well as anti de Sitter) spacetime themselves are all $C^0$-inextendible: While the first proofs of these facts employed slightly different methods (and can be found in \cite[Theorem 3.1]{Sbierski2016} for Minkowski, \cite[Remark/Outlook 1.3 i)]{Sbierski2016} for de Sitter and \cite[Theorem 2.12]{GallowayLing} for anti de Sitter), this now anachronistically follows immediately from them being timelike geodesically complete thanks to  \cite{Minguzzi2019}.

Our main examples for $C^0$-extendible hyperbolic and flat FLRW spacetimes are: 
\begin{itemize}
       \item \underline{\emph{Milne (and Milne-like) spacetimes}:}
    \par The Milne-like spacetimes (cf. \cite[Definition 3.3]{GallowayLing}) are hyperbolic FLRW spacetimes $M=(0,\infty)\times\mathbb{R}^{d}$ without particle horizon and with a scale factor that additionally satisfies that:
        \begin{enumerate}[label=(\roman*)]
            \item $a(t)$ has a sublinear growth and $a'(t)>0$ for all $t\in (0,\infty)$.
            \item $\lim_{t\to0^{+}}a'(t)=1$.
            \item $\lim_{t\to0^{+}}a(t)e^{\int_{t}^{1}\frac{1}{a(s)}ds}\in(0,\infty)$.
        \end{enumerate}
    Milne-like spacetimes always admit $C^{0}$-extensions (\cite[Theorem 3.4]{GallowayLing}) but depending on the exact form of the scale factor near $t=0$ not necessarily $C^{2}$ or $C^{\infty}$ extensions. Note that the 'classical' Milne spacetime (a hyperbolic FLRW spacetime $M=(0,\infty)\times\mathbb{R}^{d}$ with 
    $a(t)=t$ which is well-known to be isometric to $I^+(0)\subseteq (\R^{d+1},\eta)$ and hence even smoothly extendible) is an example of a Milne-like spacetime (for the physical motivation and properties of the Milne-like spacetimes, see \cite{Ling2020}).
     \item \underline{\emph{Hyperbolically sliced patches of de Sitter spacetime}:}
    Various patches of de Sitter spacetime can be realized as a spherical, hyperbolic or flat FLRW spacetime depending on the chosen coordinates (and their corresponding domain of definition). See \cite{Pascu} (Table 15 for the FLRW coordinates) or \cite[Section 3]{KimOhPark} for a nice account of the plethora of different coordinates for the de Sitter spacetime.
    \par In particular the hyperbolic FLRW spacetime $M=(0,\infty)\times \mathbb{H}^{3}$ with scale factor $a(t)=\sinh^{2}(t)$ -- and thus a Big Bang as $t\to0^{+}$ -- isometrically corresponds to a proper  open patch of de Sitter spacetime (covering a quarter of full de Sitter) and as such is smoothly extendible. An explicit $C^{0}$-extension where the extended manifold is conformal to the Minkowski spacetime was constructed in \cite[Section 3.2]{Ling2020}.
    \item \underline{\emph{Flatly sliced patches of de Sitter (and flat past asymptotically de Sitter) spacetimes}:}
    \par  The $4$-dimensional flat FLRW spacetime $M=\mathbb{R}\times \mathbb{R}^{3}$ with scale factor $a(t):=e^{t}$ is also isometric to a proper open patch of de Sitter spacetime (covering half of full de Sitter). The scale factor exhibits a Big Bang as $t\to -\infty $ and increases exponentially with time. Clearly this is smoothly extendible to full de Sitter as well. It is also $C^{0}$-extendible (see \cite[Section 3.2]{GeshnizjaniLingQuintin}) and similar to the Milne case the procedure for finding a $C^0$-extension generalizes to a family of \emph{flat past asymptotically de-Sitter spacetimes} (following the terminology of \cite[Definition 2.5]{GeshnizjaniLingQuintin} these are $4$-dimensional  flat FLRW spacetimes with a Big Bang as $t\to-\infty$ where the Hubble parameter $\frac{a'}{a}$ behaves similar enough to that of the flat slicing of de Sitter as $t\to -\infty$), cf. \cite[Corollary 3.6]{GeshnizjaniLingQuintin}. 
    They further give an explicit example of a scale factor satisfying a sufficient condition for $C^0$-extendibility but where the spacetime is no longer $C^2$ extendible (cf. \cite[Example (3.24)]{GeshnizjaniLingQuintin}.
\end{itemize}

\section{Summary of the proof of strongly spherically symmetric inextendibility in \texorpdfstring{\cite{GallowayLing}}{Galloway and Ling's paper}} \label{sec:sphericalresult}
In the rest of this paper only FLRW spacetimes with a Big Bang as $t\to0^{+}$ will be considered (and we will omit explicit reference to this initial singularity).

\begin{definition}[Strongly spherically symmetric spacetimes, cf.\ \cite{GallowayLing}]
\label{spherical}
    Let $(M,g)$ be a spacetime of dimension $d+1$. It is called a \emph{strongly spherically symmetric spacetime} if for all $p\in M$ there exists a neighbourhood $V$ of $p$ in $M$ and (potentially) a curve $L$ in $V$ such that on $U:=V\setminus L$
   there exists a diffeomorphism $\psi_s=(T,R,\omega):U\to \psi_{s}(U)\subset \mathbb{R}\times (0,\infty)\times  \mathbb{S}^2$ with $T: U\to \R$ , $R : U\rightarrow(0,\infty)$ and $\omega: U\rightarrow \mathbb{S}^{d-1}$ such that the pushforward of the metric takes the form:
    \begin{equation}
    \label{eq:Spher1}
         (\psi_{s})_{\ast} g=-F(T,R)dT^{2}+G(T,R)dR^{2}+R^{2}d\Omega^{2}_{d-1}
    \end{equation}
We call $(T,R,\omega)$ \emph{strongly spherically symmetric coordinates}. 
\end{definition}

\begin{remark} \label{naturalspherical_new} Note that we are allowing $R$ to be the timelike coordinate.
 For why we call this {\em strongly} spherically symmetric and a discussion relating it back to coordinate invariant descriptions of spherical symmetry we refer to the brief discussion in the beginning of Section 4 in \cite{GallowayLing}. The main point is that we are excluding potential $dTdR$ cross terms despite being in a regularity where the usual trick to do cannot be applied anymore. Our main reason for allowing the exclusion of a curve $L$ is that we will want to be explicitly restrict the range of our $R$ coordinate to $(0,\infty)$ for technical reasons. It 
 also allows us to start with the FLRW metrics in the coordinates from Definition \ref{def:FLRW} where we technically already had to exclude an origin from our spherical coordinates on the spatial slice.
\end{remark}

A comparison of the metric of an FLRW spacetime in FLRW coordinates and (the pushforward of) the metric of a strongly spherically symmetric spacetime shows important similarities. Both locally correspond to a warped product metric on $\R \times(0,\infty)$ and a round $d-1$ dimensional unit sphere, either with warping factor $a(t)\,r$ (if $(M,g)$ is an FLRW spacetime and an origin for the spherical coordinates on $\R^d$ has been chosen) or $R$ (if $(M,g)$ is a strongly spherically symmetric spacetime). Therefore, if a coordinate change from one to the other exists, the most natural one would be of the form $T=T(t,r)$, $R=R(t,r)$ and leave the angular part (i.e. the coordinates on the $d-1$ dimensional sphere) unchanged. It was observed in \cite{GallowayLing} that such a coordinate change exists on $M\setminus \{r^2a'(t)^2=1\}$ and is unique (up to an initial condition):

\begin{theorem}[Theorem 4.1 in \cite{GallowayLing}]
\label{theo:unique_spherical}
Let $(M, g)$ be a flat FLRW spacetime in FLRW coordinates $(t,r,\omega)$ where the scale factor $a(t)$
satisfies $a'(0) := \mbox{lim}_{t\rightarrow0^{+}} a'(t) \in (0,\infty]$. Then:

\begin{enumerate}
    \item Subject to a suitable initial condition, there exists a unique transformation of the form $\psi_{s}: (t,r,\omega) \mapsto (T(t,r),R(t,r),\omega)$ such that $g_{s}\coloneqq (\psi_{s})_{\ast}g$ takes the strongly spherically symmetric form:
    
        \begin{equation}\label{eq:strongsphersymmmetric}
        g_{s}= -F(T,R)dT^{2} + G(T,R)dR^{2} + R^{2}d\Omega^{2}_{d-1}    
        \end{equation}
    
    where $F$ and $G$ are regular (away from $\{r^{2}a'(t)^{2}=1\}$ where the Jacobian determinant $J=\frac{\partial (T,R)}{\partial (t,r)}$ vanishes).
    \item Now suppose that $M$ admits a $C^{0}$-extension $\iota: M\rightarrow M_{\textrm{ext}}$. Let $\gamma : (0, 1] \rightarrow M$ be a future directed past inextendible timelike curve parametrized by the coordinate $t$ with past endpoint $\lim_{t\to0^{+}}(\iota\circ\gamma)(t) \in \partial^{-}\iota(M)$, and suppose
    $R$ has a finite positive limit along $\gamma$ as $t\rightarrow 0^{+}$. Then, the following holds: 
    \begin{itemize}
    \item  $\lim_{t\rightarrow 0^{+}} G(\gamma(t)) = 0$. 
    \item If $F(\gamma(t))$ has a finite nonzero limit as $t\rightarrow 0^{+}$, then $\lim_{t\rightarrow0^{+}}T(\gamma(t))=\pm \infty$.
\end{itemize}
\end{enumerate}

\end{theorem}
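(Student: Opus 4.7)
My plan for Part~1 is to pin down $\psi_s$ by matching the angular part of the metric. The SSS form has angular coefficient $R^2$, while the FLRW form has $a(t)^2 r^2$, which forces $R(t,r) = a(t)\,r$ (positive root). With $R$ fixed, I would compute the Jacobian of $(t,r)\mapsto(T(t,r),R(t,r))$, invert it, and substitute into the FLRW metric; requiring that the $dT\,dR$ cross term in the pushforward vanishes reduces to the first-order linear PDE
\begin{equation*}
T_r \;=\; a(t)\,a'(t)\,r\,T_t,
\end{equation*}
which I would solve by the method of characteristics. The characteristic ODE is $dt/dr = -a(t)\,a'(t)\,r$, $T$ is constant along characteristics, and prescribing $T$ on a transverse initial curve pins down the unique solution. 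Substituting back yields
\begin{equation*}
G \;=\; \frac{1}{1 - r^2 a'(t)^2}, \qquad F \;=\; \frac{G}{T_t^{\,2}},
\end{equation*}
with Jacobian $J = a\,T_t(1 - r^2 a'^2)$, so both $F$ and $G$ are regular exactly off $\{r^2 a'^2 = 1\}$.

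For Part~2, the first bullet is a direct consequence of the formula for $G$. Along $\gamma$, $R(\gamma(t)) = a(t)\,r(t)\to R_0 > 0$ combined with $a(t)\to 0^+$ forces $r(t)\to\infty$, while the assumption $a'(0)\in(0,\infty]$ guarantees $a'(t)\geq\delta>0$ for $t$ near $0$. Hence $a'(t)\,r(t)\to\infty$, and consequently $G(\gamma(t)) = 1/(1 - a'^2 r^2)\to 0^-$.

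For the second bullet, I would exploit the identity $T_t^{\,2} = G/F$. If $F(\gamma(t))\to c\neq 0$ while $G\to 0$, then $T_t\to 0$ at the explicit rate $|T_t|\sim 1/(a'(t)\,r(t)\sqrt{|c|})$. To convert pointwise vanishing of $T_t$ into divergence of $T$ along $\gamma$, my plan is to pass to the PDE-characteristic picture: $T$ is constant on characteristics, so $T(\gamma(t)) = f(\xi(\gamma(t)))$ for a characteristic coordinate $\xi$ (asymptotically $\xi\sim t\exp(a'(0)^2 r^2/2)$ when $a'(0)<\infty$, and analogously when $a'(0)=\infty$). The blow-up $r(t)\to\infty$ drives $\xi(\gamma(t))\to\infty$, and the rate of decay of $T_t$, together with the chain rule and the asymptotic inversion $t\sim R_0/\sqrt{2\log\xi}$, yields a lower bound $|f'(\xi)|\gtrsim 1/(\xi\log\xi)$. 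Since $\int^\infty d\xi/(\xi\log\xi)$ diverges (to $\log\log\xi$), $|f(\xi)|\to\infty$, and hence $T(\gamma(t))\to\pm\infty$.

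The hard part will be the precise rate analysis in the second bullet of Part~2: a naive estimate $|\dot T| = |T_t|\,|1 + a\,a'\,r\,\dot r|$ combined only with the timelike constraint $|a\,\dot r| < 1$ yields $|\dot T|\lesssim 1/\sqrt{|c|}$, which is too weak to conclude divergence of $\int_\epsilon^1 \dot T\,dt$. The required divergence must instead be extracted from the joint asymptotics of $R\to R_0$ and $F\to c$, which together pin down both the blow-up rate of the characteristic coordinate $\xi$ and the precise rate of decay of $T_t$; only their interaction via the characteristic structure produces the marginal (iterated logarithmic) divergence of $T$. Some additional care is needed with signs: since $a'^2 r^2 > 1$ near the past boundary, both $F$ and $G$ are negative there, so $T_t^{\,2} = G/F > 0$ implicitly constrains the sign of the limit $c$.
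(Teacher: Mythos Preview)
The paper does not itself prove this theorem; it is quoted as Theorem~4.1 of \cite{GallowayLing} and the reader is referred there for the full argument. What the paper does record, in the Remark immediately following the statement, are precisely the formulas $R(t,r)=r\,a(t)$ and $G(t,r)=1/(1-r^{2}a'(t)^{2})$ and the short computation for the first bullet of Part~2: $R\to R_{0}>0$ and $a\to 0$ force $r\to\infty$, and then $a'(0)\in(0,\infty]$ gives $r^{2}a'^{2}\to\infty$, whence $G\to 0$. Your derivation of Part~1 (matching the angular part to get $R=ar$, eliminating the cross term to get the PDE $T_{r}=aa'r\,T_{t}$, and solving by characteristics) and your argument for the first bullet of Part~2 reproduce exactly what the paper records, so on those points there is nothing further to compare.

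For the second bullet the present paper gives no discussion, so let me only comment on your plan. The characteristic approach is correct, and it cleans up considerably if you use the additive label $\eta=r^{2}/2+\int^{t}\!ds/(a(s)a'(s))$ rather than its exponential: then $T_{t}=f'(\eta)/(aa')$ and the identity $F=G/T_{t}^{2}$ becomes $f'(\eta)^{2}=G\,a^{2}a'^{2}/F$. Along $\gamma$ one has $G\,a^{2}a'^{2}\sim -a^{2}/r^{2}=-a^{4}/R^{2}\to -a^{4}/R_{0}^{2}\cdot(1+o(1))$, while $\eta\sim r^{2}/2\sim R_{0}^{2}/(2a^{2})$ (the integral term is subdominant since $|d(1/a^{2})/dt|/|d\eta/dt|=2a'^{2}/a^{2}\to\infty$). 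Hence $|f'(\eta)|\sim R_{0}/(2\sqrt{|c|}\,\eta)$ for all large $\eta$ in the (unbounded, connected) range of $\eta\circ\gamma$, and since $f'$ has constant sign (otherwise $J=f'(1-a'^{2}r^{2})/a'$ would vanish), $\int^{\infty}|f'|\,d\eta=\infty$ gives $T(\gamma(t))=f(\eta(\gamma(t)))\to\pm\infty$. This is your argument with the iterated logarithm replaced by a single one; the extra $\log\xi$ in your estimate arose only because you exponentiated the natural characteristic coordinate, and either way the conclusion follows.
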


\begin{remark}\label{rem:details_spherical_proof}
    For the full proof of the previous theorem, see \cite{GallowayLing}. However, let us briefly discuss some consequences of this Theorem and parts of its proof which will also play an important role in the next section. 
    \begin{enumerate}
    \item By starting from an FLRW spacetime in FLRW coordinates we are implicitly excluding not just the hyperplane $\{r^2a'(t)^2=1\}$ but also the timelike line $\{r=0\}$ from $M$. For the future we shall denote $M$ without these two exceptional sets by $\tilde{M}$, i.e.\begin{equation}\label{eq:Mwithoutbadset}\tilde{M}:=M\setminus \big( \{r^2a'(t)^2=1\} \cup \{r=0\}\big).\end{equation}
    \item  While the precise form of $T(t,r)$ and of the metric coefficient $F$ will not be important to us we note the following nice expressions for $R(t,r)$ and the (strongly spherically symmetric) metric coefficient $G$ in terms of the old coordinates $t,r$:
               \begin{equation}\label{eq:R(t,r)}
            R(t,r)=ra(t)
        \end{equation}
        \begin{equation}\label{eq:G_coeff}
            G(t,r)=\frac{1}{1-r^{2}a'(t)^{2}} 
        \end{equation}
        \item   Demanding that the curve $\gamma$ approaching the past boundary in point 2 of the Theorem is defined on $(0,1]$ and parametrized by the FLRW-coordinate $t$ is not restrictive: First any curve in $M$ can be parametrized by $t$ because $t$ is a time function. Second any future directed timelike curve $\gamma :(a,b]\to M$ with $\lim_{s\rightarrow a^{+}}(\iota\circ\gamma)(s)=p\in\partial^{-}\iota(M)$ must satisfy $t(\gamma(s))\to 0$ as $s\to a^+$, since otherwise $t(\gamma(s))\to t_0\in (0,\infty)$ (and $a(t_0)\neq 0$ implies that also the spatial velocity of $\gamma(s) $ must remain bounded as $s\to a^+$ since $\gamma $ is timelike) contradicting $\lim_{s\rightarrow a^{+}}(\iota\circ\gamma)(s)\notin \iota(M)$.
        \item We observe that any timelike curve $\gamma$ parametrized by the $t$-coordinate approaching the past boundary with $\lim_{t\to0^{+}}R(\gamma(t))\in(0,\infty)$ will eventually be contained in $\tilde{M}$, where the strongly spherically symmetric coordinates of Theorem \ref{theo:unique_spherical} are well defined, and $\lim_{t\rightarrow0^{+}} G(\gamma(t))=0$, that is $g_s$ degenerates as one approaches the past boundary:
        Note that as $\lim_{t\to0^{+}}a(t)=0$ and $\lim_{t\to0^{+}}R(\gamma(t))\neq\infty$, it follows that 
        \begin{equation}\label{eq:limr}
            \lim_{t\to0^{+}}r(\gamma(t))= \lim_{t\to0^{+}}\frac{R(\gamma(t))}{a(t)} =\infty    
        \end{equation}
        Hence, as the curve $\gamma$ approaches the past boundary it holds that 
        \begin{equation}
            \lim_{t\to0^{+}}[r^{2}(\gamma(t))a'^{2}(t)]=\infty,
        \end{equation}
        because of the assumption that $\lim_{t\rightarrow0^{+}} a'(t) \neq 0$.
    \end{enumerate}
\end{remark}

\par The importance of the previous theorem will become clearer by defining the following concepts and the follow-up corollary.

\begin{definition}\label{def:natural_str_spher_ext}
   Let $(M,g)$ be a flat FLRW spacetime.
    Let $\iota : (M,g)\rightarrow (M_{\textrm{ext}},g_{\textrm{ext}})$ be a $C^{0}$-extension. We say that $\iota$ is a \emph{natural strongly spherical $C^{0}$-extension} provided that the following holds:
    \begin{enumerate}[label=(\roman*)]
        \item    For all $p\in \partial\iota(M) $ there exists  a neighborhood $V$ of $p$ in $M_{\textrm{ext}}$ and (potentially) a curve $L$ in $V$ such that on $U:=V\setminus L$ there exists a
        diffeomorphism $\psi_{\textrm{ext}}: U\subset M_{\textrm{ext}}\rightarrow \psi_{\textrm{ext}}(U)\subset \mathbb{R} \times (0,\infty) \times \mathbb{S}^2$ such that $g_{\textrm{ext},s}\coloneqq(\psi_{\textrm{ext}})_{\ast}g_{\textrm{ext}}$ takes the strongly spherically symmetric form (\ref{eq:Spher1}) with metric coefficients $F_{\textrm{ext}}, G_{\textrm{ext}}$. In particular, $R_{\textrm{ext}}$ must be in $(0,\infty)$ and $F_{\textrm{ext}}, G_{\textrm{ext}}$ must be nonzero on $U$. 
        \item There exist FLRW coordinates $(t,r,\omega)$ on $M$ such that
        \begin{equation}\label{eq:sphercoordcompatibility}
            \psi_{s}|_{\iota^{-1}(U)\cap \tilde{M}}=\psi_{\textrm{ext}}\circ\iota|_{\iota^{-1}(U)\cap\tilde{M}}
        \end{equation}
        where $\psi_{s}$ are the coordinates from Theorem \ref{theo:unique_spherical} and $\tilde{M}\subseteq M$ is the subset of $M$ where $\psi_s$ is a local diffeomorphism (i.e. $\tilde{M}$ is given by \eqref{eq:Mwithoutbadset}). We may sometimes refer to $\psi_s$ as the {\em natural strongly spherical change of coordinates}. 
        Note that the previous expression implies that on $\psi_s(\iota^{-1}(U)\cap \tilde{M})$
        \begin{equation}\label{eq:sphermetriccompatibility}
         g_{\textrm{ext},s}=(\psi_{\textrm{ext}})_{\ast}g_{\textrm{ext}}=(\psi_{\textrm{ext}})_{\ast}\iota_{\ast}g=(\psi_{s})_{\ast}g= g_{s}
        \end{equation}
       for $g_s$ from \eqref{eq:strongsphersymmmetric}. 
    \end{enumerate} 
\end{definition}

According to Theorem \ref{theo:unique_spherical} the considered class of FLRW metrics in strongly spherically symmetric coordinates becomes degenerate (as $G\rightarrow0$) when approaching the past boundary. This property can be used to prove that, in fact, such a  FLRW spacetime has no natural strongly spherical extension.

\begin{corollary}[Corollary 4.2 in \cite{GallowayLing}]
\label{nosphericalextension}
Let $(M,g)$ be a future inextendible flat FLRW spacetime with $a'(0) \in (0,\infty]$. Then there exists no natural strongly spherically symmetric $C^{0}$-extension of $(M,g)$.
\end{corollary}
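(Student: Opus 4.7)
The plan is to argue by contradiction: assume a natural strongly spherically symmetric $C^{0}$-extension $\iota:(M,g)\hookrightarrow (\mext,\gext)$ exists, produce a timelike curve approaching the past boundary in coordinates from Theorem \ref{theo:unique_spherical}, and exploit the fact that the metric coefficient $G$ must simultaneously degenerate (by Theorem \ref{theo:unique_spherical}(2)) and have a nonzero continuous limit at the boundary point (by the naturality of the extension), yielding a contradiction.

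First I would set up the boundary. Since $(M,g)$ is future inextendible, $\partial^{+}\iota(M)=\emptyset$, so Theorem \ref{theo:pastboundary} forces $\partial^{-}\iota(M)$ to be a nonempty achronal topological hypersurface in $\mext$. Pick any $p\in \partial^{-}\iota(M)$. By Definition \ref{def:natural_str_spher_ext}(i) there is a neighbourhood $V$ of $p$ in $\mext$ and (possibly) a curve $L$ in $V$ such that $\psi_{\textrm{ext}}=(T_{\textrm{ext}},R_{\textrm{ext}},\omega_{\textrm{ext}})$ is a diffeomorphism on $U:=V\setminus L$. Since $\partial^{-}\iota(M)$ is a topological hypersurface and $L$ is one-dimensional, I can, after possibly replacing $p$ by a nearby point in $\partial^{-}\iota(M)\cap V$, assume $p\in U$; then $\psi_{\textrm{ext}}(p)=(T_{0},R_{0},\omega_{0})$ is well-defined with $R_{0}\in(0,\infty)$ and $F_{\textrm{ext}}(p), G_{\textrm{ext}}(p)\neq 0$.

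Next I would pick a future directed timelike curve $\gamma:[0,1]\to \mext$ with $\gamma(0)=p$ and $\gamma((0,1])\subset\iota(M)$ (guaranteed by $p\in\partial^{-}\iota(M)$) and reparametrize the portion in $\iota(M)$ by the FLRW coordinate $t$, as justified in Remark \ref{rem:details_spherical_proof}(3), so that $\gamma:(0,1]\to M$ satisfies $t(\gamma(t))=t$ and $\lim_{t\to 0^{+}}\iota(\gamma(t))=p$. For $t$ small enough, $\iota(\gamma(t))\in U$, and continuity of $\psi_{\textrm{ext}}$ at $p$ gives
\begin{equation*}
\lim_{t\to 0^{+}}(T_{\textrm{ext}},R_{\textrm{ext}},\omega_{\textrm{ext}})\bigl(\iota(\gamma(t))\bigr)=(T_{0},R_{0},\omega_{0}).
\end{equation*}
By Remark \ref{rem:details_spherical_proof}(4), because $R_{0}\in(0,\infty)$ and $a'(0)\neq 0$, $\gamma(t)$ eventually lies in $\tilde{M}$, so the compatibility relation \eqref{eq:sphercoordcompatibility} gives $(T,R,\omega)(\gamma(t))=\psi_{\textrm{ext}}(\iota(\gamma(t)))$ for small $t$, and hence $R(\gamma(t))\to R_{0}\in(0,\infty)$.

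Finally I would derive the contradiction. Theorem \ref{theo:unique_spherical}(2) applied to $\gamma$ yields $\lim_{t\to 0^{+}}G(\gamma(t))=0$. On the other hand, the metric equality \eqref{eq:sphermetriccompatibility} gives $G(\gamma(t))=G_{\textrm{ext}}(\iota(\gamma(t)))$ on the portion of $\gamma$ lying in $\tilde{M}$, and continuity of $G_{\textrm{ext}}$ at $p$ forces
\begin{equation*}
\lim_{t\to 0^{+}}G(\gamma(t))=G_{\textrm{ext}}(p)\neq 0,
\end{equation*}
the nonvanishing coming from the fact that $\gext$ is a nondegenerate Lorentzian metric written in the form \eqref{eq:Spher1} on $U$. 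These two conclusions are incompatible, giving the desired contradiction.

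The main obstacle I anticipate is the bookkeeping required to guarantee that the chosen boundary point $p$ can be placed in the domain $U=V\setminus L$ of the extended coordinate chart and that the approaching curve $\gamma$ eventually enters the set $\tilde M$ where the natural change of coordinates $\psi_s$ is actually a diffeomorphism; both points are addressed by the remarks after Theorem \ref{theo:unique_spherical} (in particular Remark \ref{rem:details_spherical_proof}(4), which uses $a'(0)\neq 0$ to push $\gamma$ into $\tilde M$) and by a dimension count, but they are the places where the hypothesis $a'(0)\in(0,\infty]$ and the precise formulation of ``natural'' extension are used in an essential way.
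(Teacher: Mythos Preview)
Your proposal is correct and follows essentially the same argument as the paper's own proof: both set up a contradiction by choosing $p\in\partial^{-}\iota(M)$, use the dimension count (topological hypersurface versus the one-dimensional curve $L$) to place $p$ inside $U$, take a timelike curve $\gamma$ to $p$ reparametrized by $t$, observe via the compatibility \eqref{eq:sphercoordcompatibility} that $R(\gamma(t))\to R_{\textrm{ext}}(p)\in(0,\infty)$, and then invoke Theorem \ref{theo:unique_spherical}(2) and Remark \ref{rem:details_spherical_proof}(4) to force $G_{\textrm{ext}}(p)=0$, contradicting nondegeneracy. The only cosmetic difference is that you make the step ``future inextendible $\Rightarrow\partial^{+}\iota(M)=\emptyset\Rightarrow\partial^{-}\iota(M)$ is a hypersurface'' explicit via Theorem \ref{theo:pastboundary}, whereas the paper cites Theorem \ref{FLRWfutureboundary} directly; and there is a slight (easily repaired) circularity in your order of invoking Remark \ref{rem:details_spherical_proof}(4) before formally identifying $R$ with $R_{\textrm{ext}}\circ\iota$ along $\gamma$, but the paper's proof is equally informal on this point.
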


\begin{proof}
The proof is by contradiction. Let $(M,g)$ be a flat FLRW spacetime
satisfying that $a'(0)\in (0,\infty]$. 
Assume there exists a natural strongly spherical $C^{0}$-extension $\iota : (M,g)\rightarrow (M_{\textrm{ext}},g_{\textrm{ext}})$ and let $p\in\partial^{-}\iota(M)$. By definition there exists a neighbourhood $V$ of $p$ in $M_{\textrm{ext}} $, a curve $L$ in $V$ and a 
diffeomorphism $\psi_{\textrm{ext}}: U:=V\setminus L \subset M_{\textrm{ext}} \rightarrow \psi_{\textrm{ext}}(U)\subset \mathbb{R} \times (0,\infty) \times \mathbb{S}^{d-1}$ such that, in $U$, $g_{\textrm{ext},s}\coloneqq(\psi_{\textrm{ext}})_{\ast}g_{\textrm{ext}}$ can be written in the strongly spherically symmetric form
\begin{equation}
   g_{\textrm{ext},s}=-F_{\textrm{ext}}(T_{\textrm{ext}},R_{\textrm{ext}})dT_{\textrm{ext}}^{2}+G_{\textrm{ext}}(T_{\textrm{ext}},R_{\textrm{ext}})dR_{\textrm{ext}}^{2}+R_{\textrm{ext}}^{2}d\Omega^{2}_{\textrm{ext},d-1} .
\end{equation}
Equations \eqref{eq:sphercoordcompatibility} and \eqref{eq:sphermetriccompatibility} imply that $(T_{\textrm{ext}}\circ \iota,R_{\textrm{ext}}\circ \iota,\omega_{\textrm{ext}}\circ \iota)$  have to agree with $(T ,R,\omega)$ in $\iota^{-1}(U)\cap \tilde{M}$ and that  $$(F_{\textrm{ext}},G_{\textrm{ext}})=(F,G) \quad\quad \mathrm{on}\quad \; \psi_{\textrm{ext}}(\iota(\tilde{M})\cap U)= \psi_s(\iota^{-1}(U)\cap \tilde{M}).$$
\par Without loss of generality we may assume that $p\in U$: If $p$ were not itself in $U$ we could replace it by a point $q\in U\cap \partial^{-}\iota(M)$. Note that such a  $q$ must exist for dimensional reasons since $\partial^{-}\iota(M)$ is a topological hypersurface, cf.\ Theorem \ref{FLRWfutureboundary}, and $U=V\setminus L$ where $V$ is an open neighbourhood of $p\in \partial^{-}\iota(M)$ and $L$ is at most a 
curve.

Let $\gamma : [0, 1] \rightarrow U\subseteq M_{\textrm{ext}}$ be a future directed timelike curve with $\gamma((0,1])\subset \iota(M)$ and $\gamma(0)=p$ (this curve exists by definition of $\partial^{-}\iota(M)$) parametrized by the time function $t$. In particular, $\lim_{s\rightarrow0}R(\gamma(s))\neq0$ as $R_{\textrm{ext}}\in(0,\infty)$ (by definition of the strongly spherical coordinates) and $R$ and $R_{\textrm{ext}}$ agree in $\iota(M)\cap U$ (so $\lim_{s\rightarrow0}R(\gamma(s))=\lim_{s\rightarrow0}R_{\textrm{ext}}(\gamma(s))=R_{\textrm{ext}}(p)$). 
Thus, by the second part of Theorem \ref{theo:unique_spherical} we have 
$\lim_{t\rightarrow 0^{+}} G(\iota^{-1}(\gamma(t))) = 0$. Further, 
$\gamma(t)$ is contained in $\iota(\tilde{M})$ for all $t>0$ close enough to $0$
by point 4 of Remark \ref{rem:details_spherical_proof}. Hence, 
also $G_{\textrm{ext}}(p)=\lim_{t\rightarrow 0^{+}} G_{\textrm{ext}}(\gamma(t))=0$. Therefore, $g_{\textrm{ext},s}$ 
is degenerate at $p$, contradicting $g_{\textrm{ext},s}$ being the extended metric in strongly spherically symmetric coordinates near $p$. So no natural strongly spherical $C^{0}$-extension can exist. \end{proof}

Note that this corollary is not quite stating that, given a flat FLRW spacetime with $a'(0) \in (0,\infty]$, there does not exist an extension $\iota : (M,g)\rightarrow (M_{\textrm{ext}},g_{\textrm{ext}})$ such that $(M_{\textrm{ext}},g_{\textrm{ext}})$ is strongly spherically symmetric. It shows that there is no strongly spherical extension that is compatible with the spherical symmetry of the original FLRW spacetime in the way described. This is certainly acknowledged in \cite{GallowayLing}, but we decided to introduce the terminology of \textit{natural} strongly spherical extension to make it even more explicit. 

\begin{remark}\label{rem:nospherical_hyperbolic}
For the sake of conciseness we do not present it in detail in this paper, but in \cite{GallowayLing} Galloway and Ling also proved a theorem and corollary analogous to Theorem \ref{theo:unique_spherical} and Corollary \ref{nosphericalextension} for a certain type of hyperbolic FLRW spacetimes (see also Corollary \ref{cor:nospherical_hyp_FLRW}). Let $(M,g)$ be a future inextendible hyperbolic FLRW spacetime, $\iota : M\rightarrow M_{\textrm{ext}}$ an arbitrary $C^{0}$-extension and $\gamma : (0,1]\rightarrow M$ a future directed past inextendible timelike curve parametrized by the coordinate $t$ with $\lim_{t\to0^{+}}(\iota\circ\gamma)(t) \in \partial^{-}\iota(M)$. Ling and Galloway showed that if $a'(0)\in[0,\infty]$ and $a'(0) \neq 1$, then there exists a unique (subject to an initial condition and away from a subset of the spacetime where the determinant of the Jacobian of the strongly spherically symmetric change of coordinates vanishes) strongly spherically symmetric change of coordinates that leaves the spherical coordinate $\omega$ invariant, which furthermore satisfies that the metric coefficient $G$ vanishes as $t\to0^{+}$ provided that $\lim_{t\to0^{+}}(R(\gamma(t)))\in(0,\infty)$. Note that again $R(t,r)=ra(t)$ and the metric coefficient $G$ is given by (see \cite[Theorem 4.3 and equation (4.24)]{GallowayLing}):
\begin{equation}
    G(t,r)=\frac{1}{\cosh^2(r)-\sinh^2(r)a'(t)^2}=\frac{a^{2}(t)}{R(t,r)^{2}(1-a'^{2}(t))+a^{2}(t)}
\end{equation}
 From the previous expression it becomes clear that it is important to impose that $a'(0)\neq1$ and $\lim_{t\to0^{+}}(R(\gamma(t)))\neq0$ so that it holds that $\lim_{t\to0^{+}}G(\gamma(t))=0$. We will come back to this remark in Corollary \ref{cor:noaxisymmetric_Hyp_FLRW}.
\end{remark}

\section{Strongly axisymmetric inextendibility}\label{sec:cylindrical}

In this section we will restrict ourselves to 4-dimensional spacetimes. The aim is to generalize the previously discussed results to a subclass of axisymmetric spacetimes. As in the previous section (when considering \emph{strongly} spherically symmetric spacetimes), we will focus on a subclass of axisymmetric spacetimes for which the metric takes the following form 
$$g=-AdT^{2}+Bdz^{2}+Cd\rho^{2}+\rho^{2}d\varphi^{2}+\left(D_{1}dT+D_{2}dz+D_{3}d\rho \right)d\varphi$$
with coordinates $(T,Z,\rho,\varphi)$ and smooth functions $A,B,C$ which are allowed to depend on $T,z$ and $\rho$. We will call these spacetimes \emph{strongly} axisymmetric spacetimes. Some specific examples of spacetimes whose metric takes such a form (but which do not necessarily fit into the FLRW framework) are: 1) the Gödel Universe (for which $D_{2}=D_{3}=0$, cf. \cite[Section 2.10.1]{catalogue}), 2) cylindrically symmetric (in the sense of \cite[Section 22.1]{exact_solutions}) spacetimes, where the metric coefficients are also independent of the $z$ coordinate and $D_{1}=D_{3}=0$, or 3) the Weyl class solutions to the Einstein equations (cf. \cite[Section 20.2]{exact_solutions}), where the metric coefficients are time independent and $D_{2}=D_{3}=0$.

\begin{definition}[Strongly axisymmetric spacetimes]\label{def:axisym}
    Let $(M,g)$ be a 4-dimensional spacetime. It is called a \emph{strongly axisymmetric spacetime} if for all $p\in M$ there exists a neighbourhood $V$ of $p$ in $M$ and (potentially) a two dimensional submanifold $S\subset V$ such that for $U:=V\setminus S$ there exists a diffeomorphism $\psi_{a}=(T,z,\rho,\varphi):U\to \psi_{a}(U)\subset \R^2 \times (0,\infty)\times \mathbb{S}^1$ with $T: U\to \R$, $z: U\rightarrow \mathbb{R}$, $\rho : U\rightarrow(0,\infty)$,  and $\varphi: U\rightarrow \mathbb{S}^1$
    such that in the coordinate neighborhood the pushforward of the metric takes the form:
\begin{equation}
\label{eq:Cylind1}
    (\psi_{a})_{\ast}g=-AdT^{2}+Bdz^{2}+Cd\rho^{2}+\rho^{2}d\varphi^{2}+\left(D_{1}dT+D_{2}dz+D_{3}d\rho \right)d\varphi
\end{equation}
where $d\varphi^{2}$ is the standard metric on $\mathbb{S}^1$ and the metric coefficients are smooth functions of the coordinates $T$, $z$ and $\rho$ only. We call $(T,z,\rho, \varphi)$ \emph{strongly axisymmetric coordinates}. 
\end{definition}
\begin{remark}
    Note that in the previous definition no assumption on the sign of the metric coefficients is being made, but we of course require the signature to remain Lorentzian. 
\end{remark}
\par As one would expect strongly spherically symmetric spacetimes are also strongly axisymmetric. To motivate the changes of coordinates we are considering let us briefly consider the 3-dimensional Euclidean space $(\mathbb{R}^{3},\delta)$ with spherical coordinates $(R,\theta,\varphi)$. The metric $\delta$ can then be expressed in cylindrical coordinates $(z,\rho,\varphi)$ through the canonical change of coordinates $\rho(R,\theta)=R\sin{\theta}, z(R,\theta)=R\cos{\theta}$, which leaves $\varphi$ fixed. In the following theorem we show that this type of ``natural" coordinate change generalizes to an arbitrary strongly spherically symmetric spacetime $(M,g)$ and is essentially unique. Note that while we formulate the theorem for global  spherically symmetric 
coordinates we can of course apply it to subsets $U\subseteq M$ as well.

\begin{theorem}
\label{uniquecylindrical}
    Let $(M,g)$ be a four dimensional strongly spherically symmetric  spacetime with strongly spherically symmetric coordinates $\psi_{s}:M\to \psi_{s}(M)\subseteq \R\times (0,\infty)\times \mathbb{S}^2$ and with $g_{s}=(\psi_{s})_{\ast}g$ the metric in strongly spherically symmetric coordinates $(T,R,\omega)$. Then, subject to a suitable initial condition and the choice of coordinates $\theta$ and $\varphi$ on $\mathbb{S}^2\setminus \{\mathrm{pt.},-\mathrm{pt.}\} \cong (0,\pi)\times \mathbb{S}^1$ (i.e., the choice of axis for the axial symmetry), there exists a unique {local diffeomorphism} of the form $\psi_{a} : (R,\theta)\mapsto (z,\rho)$ such that $g_{a}\coloneqq(\psi_{a})_{\ast}(\psi_{s})_{\ast}g$ is of the strongly axisymmetric form $(\ref{eq:Cylind1})$. Moreover, the metric coefficients $D_1$ to $D_3$ vanish and $A,B,C$ are regular (away from certain measure zero sets on which the change of coordinates is not well defined). 
\end{theorem}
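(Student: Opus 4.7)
The plan is to carry out the spatial analogue of the Euclidean correspondence between spherical $(R,\theta,\varphi)$ and cylindrical $(z,\rho,\varphi)$ coordinates, namely $\rho = R\sin\theta$, $z = R\cos\theta$, adjusted to account for the non-Euclidean radial factor $G(T,R)$. Writing $g_s = -F(T,R)\,dT^2 + G(T,R)\,dR^2 + R^2(d\theta^2 + \sin^2\theta\,d\varphi^2)$ and seeking $\psi_a:(T,R,\theta,\varphi)\mapsto(T,z(R,\theta),\rho(R,\theta),\varphi)$ (leaving $T$ and $\varphi$ fixed), I would first set $\rho = R\sin\theta$, forced by matching the $d\varphi^2$-coefficient to $\rho^2$. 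Because $T$ and $\varphi$ are preserved and $dR,d\theta$ contain no $d\varphi$ component, no cross terms of the forms $dT\,dz$, $dT\,d\rho$, $dz\,d\varphi$, $d\rho\,d\varphi$, or $dT\,d\varphi$ arise in the push-forward, so $D_1 = D_2 = D_3 = 0$ follows automatically.

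The only remaining condition for strong axisymmetry is the vanishing of the $dz\,d\rho$ cross term. Computing the push-forward of $G\,dR^2 + R^2\,d\theta^2$ via the inverse maps $dR = R_z\,dz + R_\rho\,d\rho$ and $d\theta = \theta_z\,dz + \theta_\rho\,d\rho$ and equating the $dz\,d\rho$ coefficient to zero yields the first-order linear PDE
\begin{equation*}
R\sin\theta\, z_R + G(T,R)\cos\theta\, z_\theta = 0,
\end{equation*}
which I would solve by the method of characteristics: the characteristic ODE $d\theta/dR = G\cos\theta/(R\sin\theta)$ admits a first integral of the form $\Phi(T,R,\theta) = \cos\theta\cdot\exp\bigl(\int G(T,R)/R\,dR\bigr)$, and $z$ is then uniquely determined as a (monotone) function of $\Phi$ modulo the ``suitable initial condition'' (for instance, prescribing $z$ on a reference hypersurface, analogous to the initial condition appearing in Theorem~\ref{theo:unique_spherical}). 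The remaining coefficients $A = F$, $B = G R_z^2 + R^2\theta_z^2$, $C = G R_\rho^2 + R^2\theta_\rho^2$ then inherit smoothness from $F$ and $G$ away from the axis $\theta\in\{0,\pi\}$ (where the polar coordinates degenerate), the equator $\cos\theta = 0$ (where the characteristic ODE degenerates), and the locus where the Jacobian $J = R_z\theta_\rho - R_\rho\theta_z$ vanishes --- all lower-dimensional subsets, consistent with the ``measure zero'' caveat in the statement.

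The main obstacle I anticipate is reconciling the $T$-dependence of $G$ with the ansatz $z = z(R,\theta)$: a priori the first integral $\Phi$ inherits a $T$-dependence from $G(T,R)$, so a $T$-independent $z$ cannot satisfy the PDE for all $T$ simultaneously unless $G$ has a very special structure. Resolving this is where the real work lies --- either the diffeomorphism should be interpreted as a $T$-parameterised family together with a compensating redefinition of the time function that absorbs the resulting $dT\,dz$ term, or one must exploit the specific form of the strongly spherically symmetric coordinates compatible with the FLRW setting (e.g.\ \eqref{eq:G_coeff}) to make the characteristic curves effectively $T$-independent. Once this has been pinned down, uniqueness (given the initial condition and the choice of axis on $\mathbb{S}^2$, which fixes $\theta,\varphi$) is then a standard consequence of uniqueness of ODE solutions along characteristics.
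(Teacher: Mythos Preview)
Your approach is essentially identical to the paper's: same ansatz keeping $T,\varphi$ fixed, $\rho=R\sin\theta$ forced by the $d\varphi^2$ coefficient, the same linear PDE $R\sin\theta\,z_R+G\cos\theta\,z_\theta=0$ solved by characteristics with first integral $\cos\theta\cdot\exp\bigl(\int G/R\,dR\bigr)$, and the bad set identified with the vanishing locus of the Jacobian, which the paper computes explicitly as $\{G\cos^2\theta+\sin^2\theta=0\}$. The paper additionally extracts the closed form $C=G/(G\cos^2\theta+\sin^2\theta)$ by direct algebraic elimination from the $dR^2$, $d\theta^2$, $dR\,d\theta$ coefficient equations --- this is the only piece actually used downstream. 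One minor correction: the equator $\theta=\pi/2$ is not ultimately an exceptional set; the paper integrates separately on $(0,\pi/2)$ and $(\pi/2,\pi)$ and then spends the freedom in the initial condition by setting the two free functions to $f\circ\exp$ and $f\circ(-\exp)$, so that the combined formula $z=f\bigl(\cos\theta\,e^{\int G/R\,dR}\bigr)$ is smooth across $\theta=\pi/2$.

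On the $T$-dependence you flag as ``where the real work lies'': the paper does \emph{not} resolve this. It writes $z(R,\theta)=f\bigl(\cos\theta\,e^{\int G(T,R)/R\,dR}\bigr)$, a formula that visibly depends on $T$ through $G$, yet continues to label the map as $(R,\theta)\mapsto(z,\rho)$ with $T$ unchanged, and never discusses the $z_T\,dT$ contribution to $dz$ that would generate a forbidden $dT\,dz$ term. So your suspicion that a $T$-parametrized family (possibly with a compensating time redefinition) is really what is going on is more careful than the paper itself; but do not expect to find that argument there. For the applications (Theorem~\ref{nocylindrical}, Corollary~\ref{cor:noaxisymmetric_Flat_FLRW}) only the pointwise relation $C=G/(G\cos^2\theta+\sin^2\theta)\to 0$ as $G\to 0$ is needed, and that algebraic identity holds at each fixed $T$ regardless of how the $T$-dependence is handled.
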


\begin{proof}
In order to prove this theorem we closely follow the strategy used in \cite{GallowayLing} to prove Theorem \ref{theo:unique_spherical}: construct an explicit change of coordinates from a strongly spherically symmetric spacetime to a strongly axisymmetric spacetime. As an ansatz, we suppose there exists a smooth and invertible change of coordinates $\psi_{a}$ of the following form:
\begin{equation}
\label{eq:SAS2}
\begin{cases}
    z=z(R,\theta) \longrightarrow dz^{2}=z_{R}^{2}dR^{2}+z_{\theta}^{2}d\theta^{2}+2z_{R}z_{\theta}dRd\theta\\
    \rho=\rho(R,\theta) \longrightarrow d\rho^{2}=\rho_{R}^{2}dR^{2}+\rho_{\theta}^{2}d\theta^{2}+2\rho_{R}\rho_{\theta}dRd\theta\\
\end{cases}
\end{equation}
such that $g_{a}\coloneqq(\psi_{a})_{\ast}(\psi_{s})_{\ast}g$ can be written as in (\ref{eq:Cylind1}) and where $\psi_{s}$ is are the strongly spherically symmetric coordinates. Note that $z_{R}=\frac{\partial z}{\partial R}$ and $z_{\theta}=\frac{\partial z}{\partial \theta}$ (analogous for $\rho_{R}$ and $\rho_{\theta}$). As the considered change of coordinates does not affect the $T$ and $\varphi$ coordinates it directly follows that $D_{1}=D_{2}=D_{3}=0$. Hence, under the change of coordinates (\ref{eq:SAS2}), it holds that $(\psi_{a})_{\ast}(\psi_{s})_{\ast}g=(\psi_{s})_{\ast}g$:
\begin{equation}
\label{eq:SAS3}
    -FdT^{2}+GdR^{2}+R^{2}(d\theta^{2}+\sin^{2}\theta d\varphi^{2})=-AdT^{2}+Bdz^{2}+Cd\rho^{2}+\rho^{2}d\varphi^{2}
\end{equation}
For now we do not specify if $A$, $B$ and $C$ are positive or negative (it will be discussed in the second half of the proof, once the explicit form of the change of coordinates is obtained). From the previous expression we can make two quick conclusions:
\begin{itemize}
    \item As the change of coordinates does not affect the coordinate $T$, it is clear that $A=F$.
    \item Leaving $T, R$ and $\theta$ constant we see that $R^{2}\sin^{2}\theta d\varphi^{2}=\rho^{2}d\varphi^{2}$. Thus, $\rho(R,\theta)=R\sin\theta$.
\end{itemize}
Moreover, replacing (\ref{eq:SAS2}) in the right hand side of (\ref{eq:SAS3}), we get 3 new independent equations:
\begin{subequations}
    \begin{alignat}{3}
        G dR^{2}= (B z^{2}_{R}+C\rho^{2}_{R})dR^{2} \hspace{0.5cm} & \longrightarrow \hspace{0.5cm} B z^{2}_{R}=G-C\rho^{2}_{R}\label{eq:SAS3subeq1} \\
        R^{2}d\theta^{2}=(Bz^{2}_{\theta}+C\rho^{2}_{\theta})d\theta^{2} \hspace{0.5cm} & \longrightarrow \hspace{0.5cm} Bz^{2}_{\theta}=R^{2}-C\rho^{2}_{\theta} \label{eq:SAS3subeq2}\\
        0 = (Bz_{R}z_{\theta}+C\rho_{R}\rho_{\theta})dRd\theta \hspace{0.5cm} & \longrightarrow \hspace{0.5cm} Bz_{R}z_{\theta}=-C\rho_{R}\rho_{\theta} \label{eq:SAS3subeq3}
    \end{alignat}
\end{subequations}
The following calculations hold as long as $G\cos^{2}\theta+\sin^{2}\theta\neq 0$ and $z_{\theta}\neq 0$. The first corresponds to the three dimensional submanifold $G(T,R)=-\tan^2(\theta)$, so is in particular a measure zero set, and we will later discuss that the second cannot happen.
\par If we square (\ref{eq:SAS3subeq3}), plug (\ref{eq:SAS3subeq1}) and (\ref{eq:SAS3subeq2}) in it and use that $\rho_{\theta}=R \cos \theta$ and $\rho_{R}=\sin \theta$ we get an explicit expression for C:
\begin{align}
\label{eq:SAS4}
    & B^{2}z^{2}_{R}z^{2}_{\theta}=C^{2}\rho^{2}_{R}\rho^{2}_{\theta} \nonumber\\
    &\iff (G-C\rho^{2}_{R})(R^{2}-C\rho^{2}_{\theta}) = C^{2}\rho^{2}_{R}\rho^{2}_{\theta}\nonumber\\
    &\iff C=\frac{GR^{2}}{G\rho^{2}_{\theta}+R^{2}\rho^{2}_{R}}=\frac{G}{G\cos^{2}\theta+\sin^{2}\theta}
\end{align}
Note that from ($\ref{eq:SAS3subeq2}$) we have that $B=\frac{R^{2}(1-C\cos^{2}\theta)}{z^{2}_{\theta}}$. If we replace this equation and the expression for $C$ in (\ref{eq:SAS3subeq3}) we obtain:
\begin{equation}
\label{eq:SAS5}
    \frac{z_{R}}{z_{\theta}}=-\frac{G\cos \theta}{R \sin \theta}
\end{equation}
This linear PDE corresponds to a transport equation with variable coefficients. In order to solve it, the method of characteristics is used so that the PDE becomes an ODE along a specific type of curves. In particular we choose a curve $\theta(R)$ in the $(R,\theta)$-plane that satisfies $\frac{d}{dr}z(R,\theta(R))=0$ and thus by the chain rule:
\begin{equation*}
    0\overset{!}{=}\frac{d}{dR}z(R,\theta(R))=z_{R}+z_{\theta}\frac{d \theta}{dR}
\end{equation*}
Combining this with (\ref{eq:SAS5}) and restricting to $\theta\neq \tfrac{\pi}{2}$ we get the characteristic equation of the considered curves:
\begin{equation}\label{eq:characteristic_eq}
    \frac{d\theta}{dR}=-\frac{z_{R}}{z_{\theta}}=\frac{G\cos \theta}{R \sin \theta} \hspace{0.5cm}\longrightarrow \hspace{0.5cm} \tan \theta d\theta= G\frac{dR}{R}
\end{equation}
Integrating on both sides (using the substitution $u=\cos{\theta}$ for the integral on the left hand side), we get the curve: 

\begin{equation*}
    -\ln|\cos\theta|= \int\frac{G(T,R)}{R}dR
\end{equation*} 
These are the characteristic curves along which the solutions of the PDE (\ref{eq:SAS5}) are constant. Therefore, a general solution to (\ref{eq:SAS5}) is of the form:
\begin{align}
    & z(R,\theta)=
    \begin{cases}
    f_{1}\left(\int\frac{G(T,R)}{R}dR+\ln|\cos\theta|\right) \hspace{1cm}\textrm{for}\; \theta\in (0,\frac{\pi}{2})\\
    f_{2}\left(\int\frac{G(T,R)}{R}dR+\ln|\cos\theta|\right) \hspace{1cm}\textrm{for}\; \theta\in (\frac{\pi}{2},\pi)\\
    \end{cases}
\end{align}
where $f_{1}, f_{2} : \mathbb{R}\rightarrow \mathbb{R}$ are arbitrary smooth functions, which correspond to the initial condition we can prescribe for our change of coordinates. Note that the reason that two arbitrary smooth functions $f_{1}$ and $f_{2}$ are obtained is due to the fact that, when integrating expression \eqref{eq:characteristic_eq}, the intervals $(0,\frac{\pi}{2})$ and $(\frac{\pi}{2},\pi)$ have to be considered separately (as $\theta \neq \pi/2$). We use the freedom in choosing an initial condition for our change of coordinates to 'match' the functions $f_{1}$ and $f_{2}$ and make the following formulas nicer. Given an arbitrary smooth function $f : \mathbb{R} \rightarrow\mathbb{R}$ with $f'\neq0$ at each point (afterwards we will show that $f'=0$ actually leads to a contradiction with \eqref{eq:SAS3subeq1}-\eqref{eq:SAS3subeq3}; note that by \eqref{eq:ztheta} $f'=0$ if and only if $z_{\theta}=0$, where our derivation was not valid) and $f(0)=0$, the functions $f_{1}$ and $f_{2}$ are fixed by demanding that $f_{1}=f\circ\exp$ and $f_{2}=f\circ(-\exp)$. Then the expression for the coordinate $z(R,\theta)$ simplifies to
\begin{equation}
    z(R,\theta)=f(\cos{\theta}e^{\int\frac{G(T,R)}{R}dR}),
\end{equation}
which is continuous across $\theta=\frac{\pi}{2}$. Note that 
\begin{equation}\label{eq:ztheta}
    z_{\theta}=-\sin{\theta}e^{\int\frac{G(T,R)}{R}dR}\,f'
\end{equation} which is always non-zero for our choice of $f$. Replacing this and expression (\ref{eq:SAS4}) for $C$ in (\ref{eq:SAS3subeq2}) we get that $B$ is:
\begin{align}
    B&=\frac{R^{2}(1-C\cos^{2}\theta)}{z^{2}_{\theta}}=\frac{R^{2}}{e^{2\int\frac{G(T,R)}{R}dR}f'^{2}(G\cos^{2}\theta+\sin^{2}\theta)}
\end{align}
Altogether, away from $\{G=-\tan^2(\theta)\}$ we have found the following change of coordinates $\psi_{a} : (R,\theta)\mapsto (z,\rho)$ from a strongly spherically symmetric spacetime to a strongly axisymmetric spacetime, where:
\begin{equation}
\label{eq:SAS6}
\begin{cases}
    z(R,\theta) =f\left(\cos{\theta}e^{\int\frac{G(T,R)}{R}dR}\right) \\
    \rho(R,\theta) = R\sin\theta\\
\end{cases}
\end{equation}
Under this change of coordinates, the metric coefficients of $g_{c}$ are:
\begin{equation}
\label{eq:SAS7}
\begin{cases}
    A(T,z,\rho)=F(T,R) \\
    B(T,z,\rho)= \frac{R^{2}}{e^{2\int\frac{G(T,R)}{R}dR}f'^{2}(G\cos^{2}\theta+\sin^{2}\theta)} \\
    C(T,z,\rho)= \frac{G}{G\cos^{2}\theta+\sin^{2}\theta}
\end{cases}
\end{equation}

\par In particular, the coefficients $A$, $B$ and $C$ are positive or negative depending on the value of $F$ and $G$. Recall that, by definition, for strongly spherically symmetric spacetimes the metric coefficients $F$ and $G$ are either both positive or both negative:
\begin{enumerate}[label=(\roman*)]
    \item \underline{$F>0$ and $G>0$}: It is clear that in this case $A>0$, $B>0$ and $C>0$.
    \item \underline{$F<0$ and $G<-\tan^{2}{\theta}$}: From the expressions \eqref{eq:SAS7} it follows that $A<0$, $B<0$ and $C>0$. 
    \item \underline{$F<0$ and $-\tan^{2}{\theta}<G<0$}: Finally, in this case, it follows that $A<0$, $B>0$ and $C<0$.
\end{enumerate}

In summary, we can write the strongly spherically symmetric metric in an explicit strongly axisymmetric form as follows:
\begin{align}
\label{eq:SAS8}
    g_{a} &\coloneqq(\psi_{a})_{\ast}(\psi_{s})_{\ast}g= -A(T,z,\rho)dT^{2}+B(T,z,\rho)dz^{2}+C(T,z,\rho)d\rho^{2}+\rho^{2}d\varphi^{2} \nonumber\\
    &= -FdT^{2}+\frac{1}{G\cos^{2}\theta+\sin^{2}\theta}\left(\frac{R^{2}}{e^{2\int\frac{G(T,R)}{R}dR}f'^{2}}dz^{2}+Gd\rho^{2}\right)+\rho^{2}d\varphi^{2}
\end{align}
where $\psi_{s}$ is again the change of coordinates such that $(\psi_{s})_{\ast}g$ can be written in strongly spherically symmetric coordinates. In order for $\psi_{a} : (R,\theta)\mapsto (z,\rho)$ to be a local diffeomorphism, it has to be smooth and locally invertible. By the inverse function theorem we only have to check if the determinant of the Jacobian of this change of coordinates (\ref{eq:SAS6}) vanishes or not. We obtain
\begin{equation}
    J=z_{R}\rho_{\theta}-z_{\theta}\rho_{R}=z_{\theta}\left(-\frac{G\cos{\theta}}{R\sin{\theta}}\rho_{\theta}-\rho_{R}\right)=f'e^{\int\frac{G(T,R)}{R}dR}( G\cos^{2}\theta+\sin^{2}\theta)
\end{equation}
which is non-zero away from $G=-\tan^{2}\theta$, where $J$ vanishes.

\par Finally, let us for completeness discuss why it isn't actually necessary to assume $f'\neq0$ (or equivalently $z_{\theta}\neq 0$):
Plugging $z_{\theta}=0$ into equations (\ref{eq:SAS3subeq2}) and (\ref{eq:SAS3subeq3}) with $\rho=R\sin{\theta}$ we obtain
\begin{equation}
    \label{eq:tbd}
    \begin{cases}
        1 = C\cos^{2}{\theta}\\
        0 = CR\cos{\theta}\sin{\theta}
    \end{cases}    
    \end{equation}
    However these equations cannot hold simultaneously as $C,R$ and $\sin(\theta)$ are nonzero (as we only consider $\theta \in (0,\pi)$ and demanded $g_a$ to be non-degenerate), so the first implies $\cos(\theta)\neq 0$ while the second implies $\cos(\theta)=0$.

    Hence, we have shown that, subject to an initial condition and the choice of coordinates $\theta$ and $\varphi$ on $\mathbb{S}^2$ in the strongly spherically symmetric coordinates, there exists a unique transformation of coordinates $\psi_{a}$ of the form $z=z(R,\theta), \rho=\rho(R,\theta)$ which is well defined and a local diffeomorphism almost everywhere on $M$, more specifically away from the subsets $\{\theta=0 \}\cup \{\theta=\pi\}$, where the chosen coordinates on $\mathbb{S}^2$ break down, and $\{G=-\tan^{2}{\theta}\}$, where the change of coordinates breaks.
\end{proof}

Theorem \ref{uniquecylindrical} states that all strongly spherically symmetric spacetimes have a unique (subject to an initial condition and the choice of $\theta$ and $\varphi$) natural strongly axisymmetric change of coordinates. Hence, it gives a uniqueness result within the class of changes of coordinates of the form $\psi_{a}: (R,\theta) \mapsto (z,\rho)$. Similar to how Galloway and Ling obtained Corollary \ref{nosphericalextension} from Theorem \ref{theo:unique_spherical} we want to use Theorem \ref{uniquecylindrical} to show that certain strongly axisymmetric spacetimes have no natural strongly axisymmetric $C^0$-extension. In order to do so, let us define the following concepts.

\begin{definition}
\label{naturalcylindrical}
    Let $(M,g)$ be a strongly spherically symmetric spacetime of dimension $4$ together with strongly spherically symmetric coordinates $\psi_s$ defined on some $\tilde{M}\subseteq M$. Let $\iota : (M,g)\rightarrow (M_{\textrm{ext}},g_{\textrm{ext}})$ be a $C^{0}$-extension. We say that $\iota$ is a  \emph{natural strongly axisymmetric ${C^{0}}$-extension} compatible with $\psi_s$ provided that the following holds:   
        \begin{enumerate}[label=(\roman*)]
        \item 
       For all $p\in \partial \iota(M)$ there exists a neighborhood $V$ of $p$ in $M_{\textrm{ext}}$ and (potentially) a two dimensional submanifold $S\subset V$ such that $U:=V\setminus S$ satisfies  $$\iota^{-1}(U)\cap \big( M\setminus \tilde{M}\big) =\emptyset$$ and such that there exists a diffeomorphism $\psi_{\textrm{ext,a}} : U\subseteq M_{\textrm{ext}} \rightarrow \psi_{\textrm{ext,a}}(U) \subseteq \R\times \R \times (0,\infty) \times \mathbb{S}^1$ such that $g_{\textrm{ext},a}\coloneqq(\psi_{\textrm{ext,a}})_{\ast}g_{\textrm{ext}}$ takes the strongly axisymmetric form (\ref{eq:Cylind1}) with metric coefficients $A_{\textrm{ext}},B_{\textrm{ext}},C_{\textrm{ext}},(D_1)_{\textrm{ext}}, (D_2)_{\textrm{ext}}$ and $(D_3)_{\textrm{ext}}$. In particular  $g_{\textrm{ext},a}$ must be non-degenerate on $\psi_{\textrm{ext,a}}(U)$. 
        \item We have 
        \begin{equation}\label{eq:natural_strongly_axi}
        \psi_{a}\circ\psi_{s}|_{\tilde{U}}=\psi_{\textrm{ext,a}}\circ\iota|_{\tilde{U}}
        \end{equation}
        where $\psi_{a}$ is the change from spherically symmetric coordinates $(T,R,\theta,\varphi)$ to strongly axisymmetric coordinates from Theorem \ref{uniquecylindrical} and $\tilde{U}\subseteq \iota^{-1}(U) \subseteq \tilde{M}$ is the subset of $\iota^{-1}(U)$ where $\psi_a$ is a local diffeomorphism, that is
        $$ \tilde{U}= \iota^{-1}(U)\setminus \big( \{G=-\tan^2\theta \}\cup\{\theta=0,\pi\} \big). $$

        We may sometimes refer to $\psi_a$ as a \emph{natural strongly axisymmetric change of coordinates}.
        
        Recall that $g_{\textrm{ext}|_{\iota(M)}}=\iota_{\ast}g$. So the previous expression implies that on $\psi_{s}(\tilde{U})$
        \begin{equation}
            g_{\textrm{ext},a} 
            =(\psi_{\textrm{ext,a}})_{\ast}g_{\textrm{ext}}
            =(\psi_{\textrm{ext,a}})_{\ast}\iota_{\ast}g=(\psi_{a})_{\ast}(\psi_{s})_{\ast}g= g_{a}.
        \end{equation}
        \end{enumerate}
        
\end{definition}

\par This leads us directly to the next theorem which, essentially, gives conditions under which there is no natural strongly axisymmetric extension of $(M,g)$.

\begin{theorem}
\label{nocylindrical}
    Let $(M,g)$ be a future inextendible four dimensional strongly spherically symmetric spacetime with with strongly spherically symmetric coordinates $\psi_s$ defined on some $\tilde{M}\subseteq M$.
    Let $\iota : (M,g)\rightarrow (M_{\textrm{ext}},g_{\textrm{ext}})$ be a $C^{0}$-extension. Let $\gamma : (0,1]\rightarrow M$ be a past inextendible curve with $\lim_{s\to0}(\iota\circ\gamma)(s)=p\in\partial^{-} \iota(M)$. If $\gamma((0,1])\subset \tilde{M}$ and $\lim_{s\rightarrow0}G(\gamma(s))=0$ and $\lim_{s\rightarrow0^{+}}R(\gamma(s))\in (0,\infty)$, then either $\iota $ cannot be a natural strongly axisymmetric extension of $(M,g)$ compatible with $\psi_s$ or $p$ lies on the axis of symmetry, i.e., $p\in S$.
\end{theorem}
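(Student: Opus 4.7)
The plan is to argue by contradiction in the spirit of Corollary \ref{nosphericalextension}: assuming that $\iota$ is a natural strongly axisymmetric $C^{0}$-extension compatible with $\psi_s$ and that $p \notin S$, I will show that $g_{\textrm{ext},a}$ must degenerate at $\psi_{\textrm{ext,a}}(p)$, contradicting the non-degeneracy requirement in Definition \ref{naturalcylindrical}(i). The crucial inputs from Theorem \ref{uniquecylindrical} are the explicit formulas
\[
C = \frac{G}{G\cos^{2}\theta + \sin^{2}\theta}, \qquad D_{1} = D_{2} = D_{3} = 0
\]
from \eqref{eq:SAS7}, together with $\rho = R\sin\theta$ from \eqref{eq:SAS6}; the compatibility \eqref{eq:natural_strongly_axi} then transfers all of these identities to the corresponding coefficients of $g_{\textrm{ext},a}$ on $\tilde{U}$.

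The first step is to establish that the tail of $\gamma$ actually lies in $\tilde{U}$, which is precisely where compatibility gives information. Since $p \notin S$ we have $p \in U$ and $\rho_{\textrm{ext}}(\psi_{\textrm{ext,a}}(p)) > 0$; by continuity of $\rho_{\textrm{ext}} \circ \psi_{\textrm{ext,a}} \circ \iota$ on $\iota^{-1}(U)$ this function is then bounded below by some $\delta' > 0$ along the tail of $\gamma$. The identity $\rho_{\textrm{ext}} \circ \psi_{\textrm{ext,a}} \circ \iota = R \sin\theta$ holds on $\tilde{U}$; by continuity it extends to any point of $\iota^{-1}(U)$ with $\theta \in \{0,\pi\}$ as $0$, which together with the lower bound rules out $\gamma(s)$ ever hitting the pole set $\{\theta = 0, \pi\}$. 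Combined with $R(\gamma(s)) \to R_{0} \in (0,\infty)$ this yields $\sin\theta(\gamma(s)) \geq \delta > 0$ for some $\delta$ and all small $s$. Since $\sin\theta \geq \delta$ implies $\tan^{2}\theta \geq \delta^{2}$ while $G(\gamma(s)) \to 0$, the bad set $\{G = -\tan^{2}\theta\}$ is also avoided eventually, so $\gamma(s) \in \tilde{U}$ for all sufficiently small $s$.

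Once the tail of $\gamma$ sits in $\tilde{U}$, the conclusion is immediate. The explicit formula for $C$ together with $G(\gamma(s)) \to 0$ and $\sin\theta(\gamma(s)) \geq \delta$ gives $C(\gamma(s)) \to 0$, while compatibility and $D_i = 0$ give $(D_i)_{\textrm{ext}}(\psi_{\textrm{ext,a}}(\iota(\gamma(s)))) = 0$ for $i=1,2,3$. Continuity of $g_{\textrm{ext},a}$ on $\psi_{\textrm{ext,a}}(U)$ (which holds since $g_{\textrm{ext}}$ is $C^{0}$ and $\psi_{\textrm{ext,a}}$ is a diffeomorphism) then forces $C_{\textrm{ext}}(\psi_{\textrm{ext,a}}(p)) = 0$ and $(D_i)_{\textrm{ext}}(\psi_{\textrm{ext,a}}(p)) = 0$. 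Writing out the $4 \times 4$ matrix of $g_{\textrm{ext},a}$ in the coordinates $(T, z, \rho, \varphi)$ at $\psi_{\textrm{ext,a}}(p)$, the entire $d\rho$-row and column vanish, so $g_{\textrm{ext},a}$ is degenerate there, contradicting Definition \ref{naturalcylindrical}(i).

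The main obstacle I expect is the second step, namely the bootstrapping needed to rule out the exceptional sets $\{\theta = 0, \pi\}$ and $\{G = -\tan^{2}\theta\}$ on which the coordinate change $\psi_{a}$ breaks down: one has to leverage compatibility to control $\sin\theta$ along $\gamma$, but the same compatibility is only available after knowing $\gamma \in \tilde{U}$. The three hypotheses of the theorem conspire precisely to break this circularity: $p \notin S$ gives $\rho_{\textrm{ext}}(\psi_{\textrm{ext,a}}(p)) > 0$, the finite positive limit of $R(\gamma(s))$ converts this into a lower bound on $\sin\theta$, and $G(\gamma(s)) \to 0$ disposes of the remaining exceptional locus. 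Weakening any of these (in particular allowing $p \in S$) would leave open the possibility that $\gamma$ exits $\tilde{U}$, which is exactly the dichotomy reflected in the statement of the theorem.
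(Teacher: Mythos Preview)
Your proof is correct and follows essentially the same approach as the paper: argue by contradiction, verify that the tail of $\gamma$ lands in $\tilde{U}$ by using $p\notin S$ (so $\rho_{\textrm{ext}}(p)>0$) together with $R(\gamma(s))\to R_0\in(0,\infty)$ and $G(\gamma(s))\to 0$ to exclude both the pole set and $\{G=-\tan^2\theta\}$, then invoke the explicit formula for $C$ to force $C_{\textrm{ext}}=0$ at $p$. You are in fact slightly more explicit than the paper in two places: you spell out why the $(D_i)_{\textrm{ext}}$ vanish at $p$ (needed so that $C_{\textrm{ext}}=0$ really kills the whole $d\rho$-row), and you flag the apparent circularity in using compatibility to control $\sin\theta$ before knowing $\gamma\subset\tilde{U}$, which the paper passes over quickly.
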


\begin{proof} Let $\iota : (M,g)\rightarrow (M_{\textrm{ext}},g_{\textrm{ext}})$ be an arbitrary $C^{0}$-extension and $\gamma : (0,1]\rightarrow \tilde{M}$ a past inextendible curve with $\lim_{s\to0^{+}}(\iota\circ\gamma)(t)=p\in\partial^{-} \iota(M)$ and such that $\lim_{s\rightarrow0^{+}}G(\gamma(s))=0$ and $\lim_{s\rightarrow0^{+}}R(\gamma(s))\in (0,\infty)$. 
Assume that $\iota $ is a a natural strongly axisymmetric extension of $(M,g)$ compatible with the given realization of the strong spherical symmetry and $p\notin S$.
Then there exists a neighbourhood $U\subseteq M_{\textrm{ext}}$ of $p$ and a diffeomorphism $\psi_{\textrm{ext},a}:U\to \psi_{\textrm{ext},a}(U)\subseteq \R^2\times (0,\infty)\times \mathbb{S}^1$ such that $\iota^{-1}(U)\subseteq \tilde{M}$ and \eqref{eq:natural_strongly_axi} holds for $\psi_a$ from Theorem \ref{uniquecylindrical}, i.e.
$$  \psi_{a}\circ\psi_{s}|_{\tilde{U}}=\psi_{\textrm{ext,a}}\circ\iota|_{\tilde{U}}.$$
We observe that the curve $\gamma$ eventually lies in $\tilde{U}$ as it does not intersect the region $\{G=-\tan^{2}{\theta}\}$ where the strongly axisymmetric change of coordinates is not well-defined: if $\gamma$ were to intersect $\{G=-\tan^{2}{\theta}\}$ for arbitrarily small $s$, then as $\lim_{s\to0^{+}}G(\gamma(s))=0$ it should also hold that $\lim_{s\to0^{+}}\theta(\gamma(s))\in\{0,\pi\}$. But then $\rho(\gamma(s))=R(\gamma(s)) \sin(\theta(\gamma(s))) \to 0$ (as by assumption $\lim_{s\to0^{+}}R(\gamma(s))\in (0,\infty) $)  which contradicts that $\rho(p) \in (0,\infty)$ since $p\notin S$. Note that this is an analogous argument to the one appearing in Remark \ref{rem:details_spherical_proof} which shows that $\gamma$ does not intersect the problematic region $\{r^{2}a'^{2}(t)\}$.

Now recall the explicit expression of the metric coefficient $C$ in terms of the strongly spherically symmetric coordinates:
\begin{equation*}
    C(T,z,\rho)= \frac{G}{G\cos^{2}\theta+\sin^{2}\theta}\nonumber
\end{equation*}
on $\tilde{U}$.
 Then, it directly follows that
\begin{equation*}
    \lim_{s\to0^{+}}C(\gamma(s))=0,    
\end{equation*}
i.e. the metric $g_{a}=(\psi_{a})_{\ast}g_{s}=g_{\textrm{ext},a}$ degenerates at $p$, a contradiction.
\end{proof}

\par Analogously to the discussion after Corollary \ref{nosphericalextension}, the previous theorem does not state that a strongly spherically symmetric spacetime cannot have a strongly axisymmetric extension. It states that there is no \textit{natural} strongly axisymmetric extension of a strongly spherically symmetric spacetime compatible with some given strongly spherically symmetric coordinates. As before, the uniqueness argument used in the proof of the theorem only holds for the class of changes of coordinates of the class $\psi_{a}: (R,\theta) \mapsto (z,\rho)$. Therefore, in principle, there could also exist a change of coordinates of the form $\overline{\psi}_{a} : (T,R,\theta,\varphi)\mapsto (\overline{T},z,\rho,\overline{\varphi})$ such that the pushforward of the extended metric $g_{\textrm{ext}}$ could be written in the strongly axisymmetric form (\ref{eq:Cylind1}).
\par The previous theorem can be used to state an inextendibility result for FLRW spacetimes similar to Corollary \ref{nosphericalextension}. Recall that, if $(M,g)$ is an flat FLRW spacetime with $a'(0)\in (0,\infty]$, then there exists a unique (up to an initial condition) natural strongly spherical change of coordinates $\psi_{s}: (t,r) \mapsto (T,R)$. In order to find a change of coordinates from the flat FLRW spacetime to a strongly axisymmetric spacetime, one could either directly look for a change of coordinates $\overline{\psi} : (t,r,\theta,\varphi)\mapsto(T,z,\rho,\varphi)$ or use the natural strongly spherical change of coordinates $\psi_{s}$ from Theorem \ref{theo:unique_spherical} as part of a two-step transformation:
\begin{equation}
\label{eq:SAS15}
    \begin{split}
    \{t,r,\theta,\varphi\}
    \end{split}
\quad\xrightarrow{\psi_{s}}\quad
  \begin{split}
    \begin{cases}
        T=T(t,r)\\
        R=R(t,r) \\
        $\textrm{$\theta, \varphi$ unchanged}$
    \end{cases}
  \end{split}
\quad\xrightarrow{\psi_{a}}\quad
  \begin{split}
        \begin{cases}
            z=z(R,\theta)\\
            \rho=\rho(R,\theta)\\
            $\textrm{$T, \varphi$ unchanged}$
        \end{cases}
    \end{split}
\end{equation}
We note that, if $\iota:M\to M_{\textrm{ext}}$ is any $C^0$-extension, then the coefficient $G$ of $(\psi_s)_{\ast}g$ satisfies $\lim_{t\to0^{+}}G(\gamma(t))=0$ along any timelike curve $\gamma:(0,1]\to M$ with $\lim_{t\to0^{+}}(\iota\circ\gamma)(t)\in\partial^{-}\iota(M)$  and $\lim_{t\to0^{+}}R(\gamma(t))\in (0,\infty)$ by the second part of Theorem \ref{theo:unique_spherical}. Thanks to this we obtain the following Corollary from Theorem \ref{nocylindrical}.

\begin{corollary}\label{cor:noaxisymmetric_Flat_FLRW}
    Let $(M,g)$ be a future inextendible (4-dimensional) flat FLRW spacetime satisfying that $a'(0)\in (0,\infty]$. Let $\psi_{s}: (t,r) \mapsto (T,R)$ be the unique (subject to an initial condition and away from the subset $\{r^{2}a'(t)^{2}=1\}$) natural strongly spherical change of coordinates, so that $g_{s}=(\psi_{s})_{\ast}g$ can be written as in \eqref{eq:Spher1}. Then, there is no natural strongly axisymmetric $C^{0}$-extension of $(M,g)$ compatible with $\psi_s$.
\end{corollary}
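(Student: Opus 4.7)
The strategy is to argue by contradiction, assuming a natural strongly axisymmetric $C^{0}$-extension $\iota:(M,g)\to(\mext,\gext)$ compatible with $\psi_{s}$ exists, and then applying Theorem \ref{nocylindrical}. To do so I need to exhibit a past-inextendible timelike curve $\gamma:(0,1]\to M$ approaching some $p\in\partial^{-}\iota(M)\setminus S$, with $\gamma((0,1])\subset\tilde M$ and $\lim_{t\to 0^{+}} R(\gamma(t))\in(0,\infty)$; Theorem \ref{theo:unique_spherical}(2) will then supply the remaining hypothesis $\lim_{t\to 0^{+}} G(\gamma(t))=0$.

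First, since $(M,g)$ is future inextendible, $\partial^{+}\iota(M)=\emptyset$, so Theorem \ref{theo:pastboundary} yields that $\partial^{-}\iota(M)$ is an achronal topological hypersurface of topological dimension $3$ in the $4$-dimensional $\mext$. Picking any $p_{0}\in\partial^{-}\iota(M)$ and applying Definition \ref{naturalcylindrical} gives a neighborhood $V$ of $p_{0}$ and the $2$-dimensional submanifold $S\subset V$; a straightforward dimension count then produces $p\in(\partial^{-}\iota(M)\cap V)\setminus S\subseteq U$, at which $\rho(p)\in(0,\infty)$ by condition (i) of that definition. The definition of $\partial^{-}\iota(M)$ furnishes a future-directed timelike curve $\tilde\gamma:[0,1]\to\mext$ with $\tilde\gamma(0)=p$ and $\tilde\gamma((0,1])\subset\iota(M)$, and after shrinking the parameter interval I may assume $\tilde\gamma((0,1])\subset U\cap\iota(M)$. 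Setting $\gamma:=\iota^{-1}\circ\tilde\gamma$ and reparametrizing by $t$ via Remark \ref{rem:details_spherical_proof}(3) produces a past-inextendible timelike curve in $M$; the compatibility clause $\iota^{-1}(U)\cap(M\setminus\tilde M)=\emptyset$ from Definition \ref{naturalcylindrical} guarantees $\gamma((0,1])\subset\tilde M$.

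The key remaining task is $\lim_{t\to 0^{+}} R(\gamma(t))\in(0,\infty)$. Continuity of the extended $\rho$-coordinate together with the interior identity $\rho=R\sin\theta\leq R$ yields $\liminf R\geq \rho(p)>0$ for free. For the upper bound one must rule out $R(\gamma(t))\to\infty$ along any subsequence: such behavior would force $\sin\theta\to 0$ and $\cos\theta\to\pm 1$ in order to keep $\rho=R\sin\theta$ bounded, which via the explicit formula $z(R,\theta)=f(\cos\theta\,e^{\int G(T,R)/R\,dR})$ from Theorem \ref{uniquecylindrical} and the non-degeneracy of $\gext$ at $p$ should be shown to contradict continuity of the extended $z$-coordinate at $p$ (refining $\tilde\gamma$ within its timelike cone at $p$ if necessary). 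Once $\lim R\in(0,\infty)$ is established, Theorem \ref{theo:unique_spherical}(2) delivers $\lim G(\gamma(t))=0$, and Theorem \ref{nocylindrical} applied to $\gamma$ and $p\notin S$ yields the contradiction. The main obstacle is exactly this upper bound on $R$: preventing a degenerate approach in which $\theta$ escapes to the axis $\{0,\pi\}$ while $R\sin\theta$ nevertheless converges requires careful quantitative use of both the explicit form of $\psi_{a}$ and the non-degeneracy of $\gext$ at the past boundary, whereas the lower bound on $R$ falls out directly from $\rho(p)>0$.
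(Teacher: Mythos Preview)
Your argument follows the paper's strategy exactly: use future inextendibility and Theorem \ref{theo:pastboundary} to make $\partial^{-}\iota(M)$ a topological hypersurface, pick $p\in\partial^{-}\iota(M)\setminus S$ via the dimension count, take a timelike $\gamma$ approaching $p$ with image in $\iota^{-1}(U)\subset\tilde M$, verify the hypotheses of Theorem \ref{nocylindrical}, and conclude. The only place you diverge from the paper is in your handling of the condition $\lim_{t\to 0^{+}}R(\gamma(t))\in(0,\infty)$.

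You correctly extract $\liminf R\geq\rho(p)>0$ from $\rho=R\sin\theta$ and continuity of $\rho_{\mathrm{ext}}$, then flag the upper bound $\limsup R<\infty$ as the principal remaining obstacle, sketching (but not completing) an argument via the explicit formula for $z(R,\theta)$. The paper does \emph{not} do this. After observing that $\lim R\neq 0$ by precisely your $\rho=R\sin\theta$ argument, it immediately writes ``Therefore, the conditions of Theorem \ref{nocylindrical} are satisfied and we get a contradiction.'' No separate treatment of the finiteness of $R$ is given, and the $z$-coordinate plays no role. So the analysis you are contemplating is not part of the published proof: the paper treats the non-vanishing of $R$ as the only point needing verification and leaves the finiteness implicit. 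Whether one reads this as the authors regarding the upper bound as automatic, or as the paper being less scrupulous than you are here, your overall route matches the paper's, and your additional worry about $\limsup R<\infty$ goes beyond what the paper's own argument contains.
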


\begin{proof}

Assume that $\iota : (M,g)\rightarrow (M_{\textrm{ext}},g_{\textrm{ext}})$ be a natural strongly axisymmetric $C^{0}$-extension compatible with the natural strongly spherical change of coordinates $\psi_{s}$ in $M$. 

Fix a point $p\in\partial^{-}\iota(M)$. Unwinding the definitions there
\begin{enumerate}[label=(\roman*)]
    \item  exists a neighborhood $V$ of $p$ in $M_{\textrm{ext}}$ and (potentially) a two dimensional submanifold $S\subset V$ such that $U:=V\setminus S$ satisfies $\iota^{-1}(U)\subset \tilde{M}$, where $\tilde{M}$ is as in \eqref{eq:Mwithoutbadset},
    and such that there exists a diffeomorphism $\psi_{\textrm{ext,a}}$ defined on a neighborhood $U$ such that $g_{\textrm{ext},a}=(\psi_{\textrm{ext,a}})_{\ast}g_{\textrm{ext}}$ takes the strongly axisymmetric form
    \begin{multline}
    \label{eq:SAS10}
        g_{\textrm{ext},a}=-A_{\textrm{ext}}dT_{\textrm{ext}}^{2}+B_{\textrm{ext}}dz_{\textrm{ext}}^{2}+C_{\textrm{ext}}d\rho_{\textrm{ext}}^{2}+\rho_{\textrm{ext}}^{2}d\varphi_{\textrm{ext}}^{2}+\\ 
        + \Big( (D_{1})_{\textrm{ext}}dT_{\textrm{ext}}+(D_{2})_{\textrm{ext}}dz_{\textrm{ext}}+(D_{3})_{\textrm{ext}}d\rho_{\textrm{ext}} \Big)d\varphi_{\textrm{ext}}
    \end{multline}
   on $\psi_{\textrm{ext,a}}(U)\subseteq \R^2\times (0,\infty)\times \mathbb{S}^1$ and $g_{\textrm{ext},a}$ is non-degenerate. 
    \item In $\tilde{U}=\iota^{-1}(U)\setminus \big( \{G=-\tan^2\theta \}\cup\{\theta=0,\pi\} \big) \subseteq \tilde{M}$ it holds that $\psi_{a}\circ\psi_{s}|_{\tilde{U}}=\psi_{\textrm{ext,a}}\circ\iota|_{\tilde{U}}$, where $\psi_{s}$ is the natural strongly spherical change of coordinates. 
    Thus in particular $g_{\textrm{ext},a}=g_{a}$ on $\psi_s(\tilde{U})$, where $g_{a}$ comes from the coordinate change described in equation \eqref{eq:SAS15}.
    This implies that $(A_{\textrm{ext}}, B_{\textrm{ext}}, C_{\textrm{ext}})= (A,B,C)$, $(D_{1})_{\textrm{ext}}=(D_{2})_{\textrm{ext}}=(D_{3})_{\textrm{ext}}=0$ in $\tilde{U}$,  and also $(T_{\textrm{ext}}\circ \iota ,z_{\textrm{ext}}\circ \iota,\rho_{\textrm{ext}}\circ \iota)$ have to agree with $(T, z, \rho)$ in $\tilde{U}$.
\end{enumerate}
Without loss of generality we may assume that $p\in U$: If $p$ were not itself in $U$ we could replace it by a point $q\in U\cap \partial^{-}\iota(M)$. Note that such a  $q$ must exist for dimensional reasons since $\partial^{-}\iota(M)$ is a topological hypersurface, cf.\ Theorem \ref{FLRWfutureboundary}, and $U$ is an open neighbourhood of $p\in \partial^{-}\iota(M)$ without (at most) a 
two dimensional submanifold.

Let $\gamma : (0,1]\rightarrow M$ be a past inextendible timelike curve with $\lim_{t\to0^{+}}(\iota\circ\gamma)(t)=p\in\partial^{-}\iota(M)$. In the first place, note that $\lim_{t\to0^{+}}R(\gamma(t))\neq0$ as otherwise $\rho_{\textrm{ext}}(p)=\lim_{t\to0^{+}}\rho(\gamma(t))=\lim_{t\to0^{+}}\left(R\sin{\theta}\right)(\gamma(t))=0$ contradicting that, by definition of axisymmetric coordinates, $\rho_{\textrm{ext}}\in(0,\infty)$ on $U$. By the fourth point of Remark \ref{rem:details_spherical_proof} (respectively the second part of Theorem \ref{theo:unique_spherical}) this implies that $\lim_{t\to0^{+}}G(\gamma(t))=0$ and $\gamma((0,1])\subseteq \tilde{M}$.
\par Therefore, the conditions of Theorem \ref{nocylindrical} are satisfied and we get a contradiction.
\end{proof}
Corollary \ref{cor:noaxisymmetric_Flat_FLRW} only relies on the existence of unique changes of coordinates ($\psi_{s}$ from flat FLRW to a strongly spherically symmetric spacetime for which the metric coefficient $G$ degenerates as one approaches $\pr^{-}\iota(M)$ at finite radii $R$.
As discussed in Remark \ref{rem:nospherical_hyperbolic}, also for certain hyperbolic FLRW spacetimes there exists a natural strongly spherically symmetric change of coordinates $\Tilde{\psi}_{s}$ for which the metric coefficient $G$ degenerates at $\partial^{-}\iota(M)$ as long as $R$ takes a finite non-zero limit. 
Therefore, we can also state
\begin{corollary}\label{cor:noaxisymmetric_Hyp_FLRW}
    Let $(M,g)$ be a future inextendible (4-dimensional) hyperbolic FLRW spacetime satisfying that $a'(0)\in [0,\infty]$ and $a'(0)\neq 1$. Let $\psi_{s}: (t,r) \mapsto (T,R)$ be the unique (subject to an initial condition and to a subset of the spacetime where the determinant of the Jacobian vanishes) natural strongly spherical change of coordinates, so that $g_{s}=(\psi_{s})_{\ast}g$ can be written as in \eqref{eq:Spher1}. Then, there is no natural strongly axisymmetric $C^{0}$-extension of $(M,g)$ compatible with $\psi_{s}$.
\end{corollary}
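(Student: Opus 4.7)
The plan is to run the argument of Corollary \ref{cor:noaxisymmetric_Flat_FLRW} essentially verbatim, replacing the natural strongly spherical change of coordinates for flat FLRW spacetimes from Theorem \ref{theo:unique_spherical} with its hyperbolic counterpart from Remark \ref{rem:nospherical_hyperbolic}. The key facts from that remark are that for hyperbolic FLRW spacetimes with $a'(0)\in[0,\infty]$ a natural strongly spherical change of coordinates $\psi_{s}$ exists and is unique (subject to an initial condition and away from a Jacobian vanishing locus), that $R(t,r)=ra(t)$, and that the metric coefficient $G$ takes the explicit form
\begin{equation*}
G(t,r)=\frac{a^{2}(t)}{R(t,r)^{2}(1-a'(t)^{2})+a^{2}(t)}.
\end{equation*}
The extra hypothesis $a'(0)\neq 1$ guarantees that this denominator does not vanish in the limit $t\to 0^{+}$ when $R$ has a finite positive limit.

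Concretely, I argue by contradiction: assume $\iota:(M,g)\to(M_{\textrm{ext}},g_{\textrm{ext}})$ is a natural strongly axisymmetric $C^{0}$-extension compatible with $\psi_{s}$ and fix an arbitrary $p\in\partial^{-}\iota(M)$. Unwinding Definition \ref{naturalcylindrical} gives a neighborhood $V$ of $p$ in $M_{\textrm{ext}}$, a two dimensional submanifold $S\subset V$, and a strongly axisymmetric coordinate chart $\psi_{\textrm{ext,a}}$ on $U:=V\setminus S$ with $\iota^{-1}(U)\subseteq\tilde{M}$. A dimensional argument, using that $\partial^{-}\iota(M)$ is a topological hypersurface by Theorem \ref{FLRWfutureboundary} (note future inextendibility is assumed) and that $S$ has codimension two in $V$, allows one to assume without loss of generality that $p\in U$, hence $p\notin S$.

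Now I pick any past inextendible future directed timelike curve $\gamma:(0,1]\to M$ parametrized by $t$ with $\lim_{t\to 0^{+}}(\iota\circ\gamma)(t)=p$. Since $\rho_{\textrm{ext}}(p)\in(0,\infty)$ by definition of the strongly axisymmetric coordinates and $\rho_{\textrm{ext}}\circ\iota=R\sin\theta$ on $\tilde{U}$, the limit $\lim_{t\to 0^{+}}R(\gamma(t))$ is a finite, strictly positive number. The displayed formula for $G$ then forces $\lim_{t\to 0^{+}}G(\gamma(t))=0$, exactly because $a(t)\to 0$ while the denominator tends to $R(p)^{2}(1-a'(0)^{2})\neq 0$ (the case $a'(0)=\infty$ is handled by noting the denominator then tends to $-\infty$ while the numerator vanishes). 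Moreover, $\gamma$ is eventually contained in $\tilde{M}$ by the hyperbolic analog of point 4 of Remark \ref{rem:details_spherical_proof}: $R(\gamma(t))$ has a positive limit while $a(t)\to 0$, so the spatial coordinate of $\gamma$ must blow up, whence $\tanh^{2}(r(\gamma(t)))a'(t)^{2}\to a'(0)^{2}\neq 1$, so the bad set $\{\tanh^{2}(r)a'(t)^{2}=1\}$ where the Jacobian of $\psi_{s}$ vanishes is eventually missed, and likewise $\{r=0\}$.

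With $\gamma((0,1])$ eventually in $\tilde{M}$, $\lim_{t\to 0^{+}}G(\gamma(t))=0$, $\lim_{t\to 0^{+}}R(\gamma(t))\in(0,\infty)$ and $p\notin S$, Theorem \ref{nocylindrical} directly yields a contradiction. The main point to watch is simply verifying the hyperbolic analog of point 4 of Remark \ref{rem:details_spherical_proof} under the hypothesis $a'(0)\neq 1$ (in contrast to the flat condition $a'(0)\neq 0$). Failure of this single hypothesis corresponds precisely to Milne-like spacetimes, which do admit $C^{0}$-extensions (cf.\ Section \ref{sec:examples}), so the restriction $a'(0)\neq 1$ is not merely technical but genuinely sharp in the sense that the strategy of proof breaks down exactly at the cases known to be extendible.
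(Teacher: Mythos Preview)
Your proposal is correct and follows essentially the same approach as the paper's own proof, which simply says the argument is analogous to Corollary \ref{cor:noaxisymmetric_Flat_FLRW} using the hyperbolic version of the strongly spherical coordinates from Remark \ref{rem:nospherical_hyperbolic} (where $G\to 0$ as $t\to 0^{+}$ provided $R$ has a finite positive limit, which holds by the same $\rho_{\textrm{ext}}(p)\in(0,\infty)$ argument). You have merely unpacked the details more explicitly than the paper does, including the analog of point 4 of Remark \ref{rem:details_spherical_proof} and the role of the hypothesis $a'(0)\neq 1$.
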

\begin{proof}
    The proof is analogous to the one of Corollary \ref{cor:noaxisymmetric_Flat_FLRW} using that in \cite{GallowayLing} (recall the discussion in Remark \ref{rem:nospherical_hyperbolic}) it was also proven that for hyperbolic FLRW spacetimes with $a'(0)\in [0,\infty]$ and $a'(0)\neq 1$ the metric coefficient $G$ vanishes as $t\to0^{+}$ provided that $R$ has a finite positive limit as $t\to0^{+}$. And this condition holds again by the same argument than the one given in the proof of the previous corollary.
\end{proof}

To end let us briefly discuss the relevance of our results in the context of the other $C^0$-inextendibility results in the literature which we reviewed in Section \ref{sec:inext_overview}. Both our results are extensions of some of the earliest $C^0$-inextendibility results through a Big Bang from \cite{GallowayLing} back in 2016, but just as their original counterparts are limited by our symmetry assumptions. One further immediately notices that the previous Corollary \ref{cor:noaxisymmetric_Hyp_FLRW} is mostly interesting for hyperbolic FLRW spacetimes without particle horizons, as if they do have a particle horizon and satisfy that $a(t)e^{\int_{t}^{1}\frac{1}{a(t')}dt'}\to \infty$ as $t\to0^{+}$, then the recent stronger general $C^{0}$-inextendibility result by Sbierski  \cite{Sbierski2023} already applies (cf.\ Theorem \ref{th:Sbierskis_inext} and Table \ref{table}). Contrary to this, Corollary \ref{cor:noaxisymmetric_Flat_FLRW} remains more widely relevant as we are still lacking any general $C^0$-inextendibility results for flat FLRW spacetimes with a Big Bang as $t\to 0^+$. Of course the symmetry assumptions remain nevertheless limiting, so trying to establish a $C^0$-inextendibility results in this case without requiring any symmetry (or at least only requiring weaker symmetry and not this strong compatibility with the symmetry structure of the original symmetry structure of the flat FLRW spacetime) in the extension continues be an interesting goal that was already formulated in \cite{GallowayLing}.\\

\end{document}